\newlength\figurewidth 
\definecolor{green_}{RGB}{0,180,0}%
\definecolor{green_2}{RGB}{0,80,0}%
\definecolor{blue_}{RGB}{0,0,180}%
\definecolor{blue_light}{RGB}{0,180,180}%
\definecolor{mygreen}{RGB}{28,172,0} 
\definecolor{mylilas}{RGB}{170,55,241}
\newcommand{\R}{\mathbb{R}}
\newcommand{\C}{\mathbb{C}}
\newcommand{\comm}[1]{\textcolor{red}{}}
\DeclareMathOperator{\rank}{rank}
\DeclareMathOperator{\Ker}{Ker}
\DeclareMathOperator{\diag}{\mathrm{diag}}
\newcommand{\bsym}[1]{\boldsymbol{#1}}
\title{Automatic rational approximation and linearization of nonlinear eigenvalue problems}
\shorttitle{Automatic rational approximation for nonlinear eigenvalue problems}
\author{%
{\sc
	Pieter Lietaert \thanks{Department of Computer Science, KU Leuven, University of Leuven, 3001 Heverlee, Belgium. 
		Supported by KU Leuven Research Council Grant OT/14/074 and BELSPO Grant IAP~VII/19.},
		Javier P{\'e}rez \thanks{
Department of Mathematical Sciences, University of Montana, MT, USA. The
research of J. P´erez was partially supported by KU Leuven Research Council grant OT/14/074 and the Interuniversity Attraction
Pole DYSCO, initiated by the Belgian State Science Policy Office.
		},
		Bart Vandereycken \thanks{
Section of Mathematics, University of Geneva, Rue du Li\`evre 2-4, 1211 Geneva, Switzerland 
		}
{\sc and }\\
Karl Meerbergen \footnotemark[2]}
}
\begin{document}
\maketitle

\begin{abstract}
{We present a method for solving nonlinear eigenvalue problems using rational approximation. The method uses the AAA method by Nakatsukasa, S\`{e}te, and Trefethen to approximate the nonlinear eigenvalue problem by a rational eigenvalue problem and is embedded in the state space representation of a rational polynomial by Su and Bai.
The advantage of the method, compared to related techniques such as NLEIGS and infinite Arnoldi, is
the efficient computation by an automatic procedure.
In addition, a set-valued approach is developed that allows building a low degree rational approximation of a nonlinear eigenvalue problem.
The method perfectly fits the framework of the Compact rational Krylov methods (CORK and TS-CORK), 
allowing to efficiently solve large scale nonlinear eigenvalue problems.
Numerical examples show that the presented framework is competitive with NLEIGS and usually produces smaller linearizations with the same accuracy but with less effort for the user.
}{Nonlinear eigenvalue problem, Rational interpolation, Rational Krylov method}
\end{abstract}

\section{Introduction}

The nonlinear eigenvalue problem (NEP) is the problem of finding  scalars $\lambda\in\mathbb{C}$ and nonzero vectors $x,y\in\mathbb{C}^n$ such that
\begin{equation}\label{eq:eigval}
A(\lambda)x =0 \quad \mbox{and} \quad y^*A(\lambda)=0,
\end{equation}
where $A\colon\mathbb{C}\to\mathbb{C}^{n\times n}$ is a nonlinear matrix valued function.
The scalar $\lambda\in\mathbb{C}$ is called an eigenvalue and the vectors $x$ and $y$ are called, respectively, associated right and left eigenvectors.
Usually, one is interested in the eigenvalues in a specific region $\Sigma\subset\mathbb{C}$.
We assume that $A$ in \eqref{eq:eigval} is regular, i.e., there is at least one $\lambda\in\mathbb{C}$ for which $\det(A(\lambda))\neq0$.

Numerical methods for computing the eigenvalues of generic nonlinear eigenvalue problems in a region $\Sigma$ are based on approximation theory.
There currently are two classes of methods: those based on contour integrals \citep[][]{beyn12}, and those based on rational and polynomial approximation, e.g. \cite{efkr12}, infinite Arnoldi \citep[][]{jamm12} and NLEIGS \citep[][]{gvmm14}.
The methods based on contour integration rely on Keldysh theorem, where the eigenvalues in $\Sigma$ are found as the poles of a resolvent using contour integration.
There are several variations of this approach, see \cite{beyn12}.
For large scale problems, the Jacobi--Davidson method is combined with contour integration in \cite{effe13} for computing invariant pairs of \eqref{eq:eigval}.
The second class of methods approximates $A$ by a polynomial or rational function on $\Sigma$ and solves the resulting polynomial or rational eigenvalue problem.
Polynomial and rational eigenvalue problems can be solved by Krylov methods through a linearization. The prototype linearization is the companion pencil.

In this paper, we focus on methods that build a rational approximation, reformulate the resulting problem as a linear eigenvalue problem (by the process of linearization) and then use a Krylov method for solving the linearized problem.
Roughly speaking, there exist three approaches for rational approximation. The NLEIGS method uses potential theory for the selection of poles and interpolation points, and embeds this within a rational polynomial expressed in rational Newton basis \citep[see][]{gvmm14}.
The second is the infinite Arnoldi method \citep[][]{jamm12} that uses the discretization of an infinite dimensional operator that is a linear representation of the nonlinear eigenvalue problem.
The discretization of this operator leads to a finite dimensional linear problem that is solved by the Arnoldi method.
The third approach expresses a Pad\'e approximation in state-space form and applies a Krylov method to a linearization, see \cite{suba11}.


The approach of this paper is inspired by \cite{suba11} and assumes that the matrix valued function $A$ can be written as
\begin{equation}
\label{eq:NEP}
A(\lambda)= P(\lambda) + G(\lambda),
\end{equation}
where $P(\lambda)$ is a matrix polynomial and $G$ is an arbitrary matrix valued function of the form
\begin{equation}\label{eq:G}
G(\lambda) = \sum_{i=1}^s (C_i-\lambda D_i)g_i(\lambda),
\end{equation}
where $C_i,D_i$ are constant $n\times n$ matrices and $g_i \colon \mathbb{C}\rightarrow\mathbb{C}$ is a nonlinear function.
Using the approach of \cite{suba11}, each $g_i$ can be approximated by a different rational function with different poles and interpolation points, determined independently from each other.
This in contrast to NLEIGS \citep[][]{gvmm14}, where the same poles and nodes are used for all terms.
For efficiency, it is assumed that the number of nonlinear terms, $s$, is modest.

The contribution of this paper is threefold.
First, the rational approximations used in this work are obtained by employing the adaptive Antoulas--Anderson (AAA) algorithm introduced by Nakatsukasa, S\`{e}te, and Trefethen in \cite{nast16}.
This approach presents two key advantages:
the AAA algorithm is not domain-dependent, i.e., it works effectively even with  sets that may include  disconnected regions of irregular shape, possibly unbounded; and
once the approximation region has been fixed, it is the algorithm and not the user who chooses the number of poles and zeros of the rational approximation and their values in an adaptive way.
Hence, unlike other methods, neither special knowledge of the nonlinear functions nor advance knowledge of complex analysis is required from the user.

The second contribution is
an automatic strategy that uses the same poles and interpolation points for (a subset of) all $g_i$, which leads to the same appealing properties as NLEIGS, when $s$ is not so small.
Numerical experiments compare with rational approximations obtained using NLEIGS in \cite{gvmm14}. 

The third contribution of the paper lies in the observation that
the linearization from \cite{suba11} fits perfectly in the framework of the CORK (Compact Rational Krylov) method in \cite{bemm15} and the two-sided CORK method in \cite{limt17},
which makes the linearization suitable for large scale problems and rational functions of high degree.
There is no need to require $C_i,D_i$ to be of low rank but if they are it can be exploited. 

The rest of the paper is structured as follows.
Section~\ref{sec:AAA} presents the original AAA approximation of a nonlinear function $g$ and its generalization to a set of nonlinear functions $g_1,\ldots,g_s$.
In \S\ref{sec:lin}, we reformulate the linearization by \cite{suba11} as a CORK linearization and show a relation between the eigenvalues and eigenvector of the linearization and the rational matrix polynomial,
including the case of low rank $C_i-\lambda D_i$, $i=1,\ldots,s$.
Section~\ref{sec:examples} illustrates the AAA approaches by solving nonlinear eigenvalue problems and compares with NLEIGS.
Section~\ref{sec:conlusions} is reserved for the conclusions.

\section{Scalar rational approximations by AAA}\label{sec:AAA}

As explained in the introduction, we intend to use rational approximations for the NEP. As a first step, this requires approximating the scalar functions $g_i(\lambda)$ in~\eqref{eq:G}, either separately or together, by rational functions. Our approach is based on a recently introduced  algorithm from~\cite{nast16} that we review next.

\subsection{The AAA algorithm}

Let $g\colon \C \to \C$ denote a generic nonlinear function that we would like to approximate on $\Sigma \subset \C$ by a rational function $r(\lambda)$. The adaptive Antoulas--Anderson (AAA) algorithm from~\cite{nast16}, will construct this function $r(\lambda)$ in barycentric form: 
\begin{equation}\label{eq:bary_approx}
 r(\lambda)=\underbrace{\sum_{j=1}^{m} \frac{g(z_j)\, \omega_j}{\lambda-z_j}}_{=:n_m(\lambda)} \Big{/} \underbrace{\sum_{j=1}^{m} \frac{\omega_j}{\lambda-z_j}}_{=:d_m(\lambda)}.
 \end{equation}
Here, $z_1,\hdots,z_m$ are a set of distinct \emph{support points} and $\omega_1,\hdots,\omega_m$ are the \emph{weights}.
 Note that, as long as $\omega_j\neq 0$,  $\lim_{\lambda\rightarrow z_j}r(\lambda)=g(z_j)$.  
In other words, the rational function~\eqref{eq:bary_approx} interpolates the function  $g(\lambda)$ at $z_1,\ldots,z_m$.

The AAA algorithm computes the support points and the weights iteratively by minimizing the linearized residual of the rational approximation on a sample set $Z$ of $M$ points. 
The set $Z$ can be seen as a sufficiently fine discretization of the region $\Sigma$ which means $M$ is typically quite large, say, $10^4$. At the $m$th step of the algorithm, the next support point $z_m$ is chosen where the residual $g(\lambda)-n_{m-1}(\lambda)/d_{m-1}(\lambda)$ attains its maximum absolute value. Then, denoting
\begin{align*}
&Z^{(m)}=\{Z^{(m)}_1,\ldots,Z^{(m)}_{M-m}\}:=Z/\{z_1,\ldots,z_m\} \quad \mbox{and} \\
&G^{(m)}=\{G^{(m)}_1,\ldots,G^{(m)}_{M-m}\}:=g(Z^{(m)}),
\end{align*}
it computes the vector of weights $\omega=\begin{bmatrix} \omega_1 & \cdots & \omega_m \end{bmatrix}^T$ with $\|\omega\|_2=1$ that minimizes the 2-norm of the linearized residual
\begin{equation}
\label{eq:AAA-residual}
\begin{bmatrix}
g(Z_1^{(m)})d_m(Z_1^{(m)})-n_m(Z_1^{(m)}) \\ \vdots \\
g(Z_{M-m}^{(m)})d_m(Z_{M-m}^{(m)})-n_m(Z_{M-m}^{(m)})
\end{bmatrix} =
\begin{bmatrix}
\frac{G_1^{(m)}-g(z_1)}{Z_1^{(m)}-z_1} &\cdots & \frac{G_1^{(m)}-g(z_m)}{Z_1^{(m)}-z_m}  \\
\vdots & \ddots & \vdots \\
\frac{G_{M-m}^{(m)}-g(z_1)}{Z_{M-m}^{(m)}-z_1} &\cdots& \frac{G_{M-m}^{(m)}-g(z_m)}{Z_{M-m}^{(m)}-z_m}
\end{bmatrix}
\begin{bmatrix}
\omega_1\\ \vdots \\ \omega_m
\end{bmatrix}.
\end{equation}
This can be done by using the SVD on the Loewner matrix above. The procedure terminates when the norm of the residual \eqref{eq:AAA-residual} is less than a user defined tolerance, for example, $10^{-13}$.
For further details of the AAA algorithm, including the removal of numerical Froissart doublets, we refer to~\cite{nast16} and our (modified) implementation in App.~\ref{app:setAAA}.

A key feature of AAA is its flexibility in selecting the domain of approximation (through the set $Z$), unlike other methods that are domain-dependent.  Furthermore, the user only needs to supply this domain and a tolerance. Then, the poles and zeros of the rational interpolant~\eqref{eq:bary_approx} are found automatically by the algorithm. In many cases, AAA succeeds in computing a rational interpolant that is close to the optimal one in min-max sense. However, the algorithm can fail on difficult functions; see~\cite{Filip:2017} for examples. In the numerical experiments, however, we did not see such pathological behavior and AAA performed adequately.

In Section~\ref{sec:lin}, it will be convenient to write the rational functions from AAA that are in barycentric form into an equivalent state-space form.
\begin{proposition}\label{prop:state-space}
The rational function  \eqref{eq:bary_approx} can be written as
\begin{equation}\label{eq:f-approx}
r(\lambda) = 
\begin{bmatrix}
g(z_1) \omega_1 & \cdots & g(z_{m})\omega_{m}
\end{bmatrix}
\begin{bmatrix}
\omega_1 & \omega_2 & \cdots & \omega_{m-1} & \omega_{m} \\
\lambda - z_1 & z_2-\lambda \\
& \lambda - z_2 & \ddots \\
& & \ddots & z_{m-1}-\lambda \\
& & & \lambda-z_{m-1} & z_{m} - \lambda
\end{bmatrix}^{-1}
\begin{bmatrix}
1 \\0 \\ \vdots \\ 0
\end{bmatrix},
\end{equation}
where the entries that are not depicted are equal to zero.
\end{proposition}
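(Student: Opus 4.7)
My plan is to verify the formula by directly solving the linear system $M(\lambda) x = e_1$, where $M(\lambda)$ denotes the $m \times m$ matrix in \eqref{eq:f-approx} and $e_1$ is the first standard basis vector. Once $x = (x_1,\ldots,x_m)^\top$ is known, the claim reduces to checking that the inner product of $x$ with the row vector $[g(z_1)\omega_1, \ldots, g(z_m)\omega_m]$ equals the barycentric expression \eqref{eq:bary_approx}. Since both sides are rational in $\lambda$, it suffices to establish the identity on the set where $\lambda \notin \{z_1,\ldots,z_m\}$ and $d_m(\lambda) \neq 0$, which is Zariski-dense.

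The structure of $M(\lambda)$ makes the computation almost immediate. Rows $2$ through $m$ each contain exactly two nonzero entries, giving the recursion
\begin{equation*}
(\lambda - z_j)\, x_j + (z_{j+1} - \lambda)\, x_{j+1} = 0, \qquad j=1,\ldots,m-1.
\end{equation*}
Solving the recursion yields
\begin{equation*}
x_{j+1} = \frac{\lambda - z_j}{\lambda - z_{j+1}}\, x_j, \qquad \text{hence} \qquad x_j = \frac{\lambda - z_1}{\lambda - z_j}\, x_1
\end{equation*}
for every $j$. The first row of the system, $\sum_{j=1}^{m} \omega_j x_j = 1$, then pins down $x_1$ through
\begin{equation*}
1 = x_1 (\lambda - z_1) \sum_{j=1}^{m} \frac{\omega_j}{\lambda - z_j} = x_1 (\lambda - z_1)\, d_m(\lambda),
\end{equation*}
so that $x_j = \bigl[(\lambda - z_j)\, d_m(\lambda)\bigr]^{-1}$ for all $j$.

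Substituting back into the product, I would then obtain
\begin{equation*}
\begin{bmatrix} g(z_1)\omega_1 & \cdots & g(z_m)\omega_m \end{bmatrix} x \;=\; \frac{1}{d_m(\lambda)} \sum_{j=1}^{m} \frac{g(z_j)\,\omega_j}{\lambda - z_j} \;=\; \frac{n_m(\lambda)}{d_m(\lambda)} \;=\; r(\lambda),
\end{equation*}
which establishes \eqref{eq:f-approx}. I do not anticipate any substantive obstacle: the only mild subtlety is to make sure the argument is valid as an identity of rational functions, rather than only pointwise, so that removable singularities at $\lambda = z_j$ (where $r(z_j) = g(z_j)$ by interpolation) are handled correctly; this is immediate because two rational functions that agree on an infinite set coincide everywhere.
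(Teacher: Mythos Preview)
Your proof is correct and follows essentially the same approach as the paper: both arguments compute the first column of the inverse of the bidiagonal matrix (equivalently, solve $M(\lambda)x=e_1$), obtain $x_j = [(\lambda - z_j)\, d_m(\lambda)]^{-1}$, and then take the inner product with the row vector $[g(z_1)\omega_1,\ldots,g(z_m)\omega_m]$. The only difference is that the paper asserts the form of this first column and leaves the verification to the reader, whereas you derive it explicitly via the two-term recursion from rows $2$ through $m$.
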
 
\begin{proof}
Let $d(\lambda)=\sum_{j=1}^{m}\omega_j(\lambda-z_j)^{-1}$ and $n(\lambda)=\sum_{j=1}^{m}g(z_j)\omega_j(\lambda-z_j)^{-1}$ denote, respectively, the denominator and numerator of $r(\lambda)$ in \eqref{eq:rat_approx}.
Then, it is easily verified that the vector
\[
\frac{1}{d(\lambda)}
\begin{bmatrix}
(\lambda-z_1)^{-1} \\ \vdots \\ (\lambda-z_{m})^{-1} 
\end{bmatrix}
\]
is the first column of  
\[
\begin{bmatrix}
\omega_1 & \omega_2 & \cdots & \omega_{m-1} & \omega_{m} \\
\lambda - z_1 & z_2-\lambda \\
& \lambda - z_2 & \ddots \\
& & \ddots & z_{m-1}-\lambda \\
& & & \lambda-z_{m-1} & z_{m} - \lambda
\end{bmatrix}^{-1}.
\]
Thus, we obtain the desired result as
\begin{displaymath}
\frac{1}{d(\lambda)}\begin{bmatrix}
g(z_1)\omega_1 & \cdots & g(z_{m})\omega_{m}
\end{bmatrix}
\begin{bmatrix}
(\lambda-z_1)^{-1} \\ \vdots \\ (\lambda-z_{m})^{-1} 
\end{bmatrix}=
\frac{n(\lambda)}{d(\lambda)}=r(\lambda). 
\end{displaymath}
\end{proof}

\subsection{A set-valued AAA algorithm}\label{sec:set-valued AAA}

There are applications where a large number of nonlinear functions $g_1,\hdots,g_s$ need to be approximated over the same region of the complex plane.
One can of course use the AAA algorithm on each function separately but, as we will see in the numerical examples in \S\ref{sec:examples},  it is sometimes beneficial to find support points and poles that work for all the functions at the same time. The result is then a linearization with a  smaller total degree compared to the linearization obtained from the separate applications of AAA. 
In this section, we show how to extend the AAA approach to accomplish this.

Let $g_1,\hdots,g_s$ be nonlinear functions of the same scale, which can be accomplished by simply scaling them as $g_i(\lambda) / \max_j|g_i(z_j)|.$
Our aim is to construct rational approximations to $g_1,\hdots,g_s$ of the form
\begin{equation}\label{eq:bary_approx-set-valued}
g_i(\lambda)\approx \underbrace{\sum_{j=1}^{m} \frac{g_i(z_j)\, \omega_j}{\lambda-z_j}}_{=:n_{i,m}(\lambda)} \Big{/} \underbrace{\sum_{j=1}^{m} \frac{\omega_j}{\lambda-z_j}}_{=:d_m(\lambda)}.
 \end{equation} 
Note that all rational approximants share the same support points $z_j$ and weights $\omega_j$. 
In the spirit of the AAA algorithm, these support points and weights are computed iteratively.
At the $m$th step, the next support point $z_m$ is chosen where the maximum of the residuals, i.e.,
\begin{equation*}
	\max_{i}{\left|g_i(\lambda)-\frac{n_{i,m-1}(\lambda)}{d_{m-1}(\lambda)}\right|},
\end{equation*}
attains its maximum. Then, denoting
\begin{align*}
&Z^{(m)}=\{Z^{(m)}_1,\ldots,Z^{(m)}_{M-m}\}:=Z/\{z_1,\ldots,z_m\} \quad \mbox{and} \\
&G_i^{(m)}=\{G^{(m)}_{i,1},\ldots,G^{(m)}_{i,M-m}\}:=g_i(Z^{(m)}),
\end{align*}
the residual vector \eqref{eq:AAA-residual} can be written to incorporate different functions:
\begin{equation}
\label{eq:AAA-residual-multf}
	\begin{bmatrix}
		\frac{G_{1,1}^{(m)}-g_1(z_1)}{Z_1^{(m)}-z_1} &\cdots & \frac{G_{1,1}^{(m)}-g_1(z_m)}{Z_1^{(m)}-z_m}  \\
		\vdots & \ddots & \vdots \\
		\frac{G_{1,M-m}^{(m)}-g_1(z_1)}{Z_{M-m}^{(m)}-z_1} &\cdots& \frac{G_{1,M-m}^{(m)}-g_1(z_m)}{Z_{M-m}^{(m)}-z_m} \\
		\frac{G_{2,1}^{(m)}-g_2(z_1)}{Z_1^{(m)}-z_1} &\cdots & \frac{G_{2,1}^{(m)}-g_2(z_m)}{Z_1^{(m)}-z_m}  \\
		\vdots & \ddots & \vdots \\
		\frac{G_{2,M-m}^{(m)}-g_2(z_1)}{Z_{M-m}^{(m)}-z_1} &\cdots& \frac{G_{2,M-m}^{(m)}-g_2(z_m)}{Z_{M-m}^{(m)}-z_m} \\
		\vdots & \vdots & \vdots \\
		\frac{G_{s,1}^{(m)}-g_s(z_1)}{Z_1^{(m)}-z_1} &\cdots & \frac{G_{s,1}^{(m)}-g_s(z_m)}{Z_1^{(m)}-z_m}  \\
		\vdots & \ddots & \vdots \\
		\frac{G_{s,M-m}^{(m)}-g_s(z_1)}{Z_{M-m}^{(m)}-z_1} &\cdots& \frac{G_{s,M-m}^{(m)}-g_s(z_m)}{Z_{M-m}^{(m)}-z_m} \\
	\end{bmatrix}
	\begin{bmatrix}
		\omega_1\\ \vdots \\ \omega_m
	\end{bmatrix},
\end{equation}
The vector of weights $\omega=\begin{bmatrix} \omega_1 & \cdots & \omega_m \end{bmatrix}^T$ is computed as the vector minimizing the norm of \eqref{eq:AAA-residual-multf} under the constraint $\|\omega\|_2=1$.
We note that minimizing this norm is equivalent to minimizing the sum of squares of the norms of the residual vectors of the different functions.

The computational cost of the AAA algorithm and of our modified set-valued version  might become an issue when a large number of nonlinear functions need to be approximated. 
To partly alleviate this extra cost, we can slightly reformulate how the AAA algorithm solves the least squares problems in \eqref{eq:AAA-residual} and \eqref{eq:AAA-residual-multf} to compute the vector of weights. 
This is outlined in the next section.

\subsection{Efficient solution of the least squares problem in AAA}
The original AAA algorithm requires the repeated singular value decomposition of the tall but skinny Loewner matrix in~\eqref{eq:AAA-residual} to find its right singular vector  corresponding to the smallest singular value.
This can become costly for large matrices, which is certainly the case if we have do this for a large number of functions as in~\eqref{eq:AAA-residual-multf}. Fortunately, we can exploit that the Loewner matrices in~\eqref{eq:AAA-residual} or in~\eqref{eq:AAA-residual-multf} differ only in a few rows and columns throughout each iteration of the AAA algorithm. In particular,  by computing economy-size QR decompositions of said matrices, we only need to obtain the right singular vectors of much smaller matrices. In turn, these QR decompositions can be computed using updating strategies.

Let $L_m$ be the Loewner matrix in~\eqref{eq:AAA-residual} or~\eqref{eq:AAA-residual-multf}, of size $n\times m$.
We recall that $n\gg m$.
The matrix $L_m$ can be stored as
\[
L_m = QH,
\]
where $Q$ is an $n\times m$ matrix with orthonormal columns, and $H$ is an $m\times m$ matrix. 
Note that the right singular vectors of the matrix $L_m$ can be computed as the right singular vectors of the small matrix $H$.
The column of the matrix $Q$ can be iteratively found by adding one column of the Loewner matrix $L_m$ in each step and applying Gram-Schmidt orthogonalization.
Note, however, that in each step of AAA, the number of rows of $Q$ is reduced by one for \eqref{eq:AAA-residual} and by $s$ for \eqref{eq:AAA-residual-multf} because the support points $z_i$ are removed from the set $Z$ that defines the residual.
With the removal of these rows, $Q$ is no longer orthogonal.
However, we can reorthogonolize $Q$ cheaply as follows.
Let $Q_{\rm r} \in \mathbb{C}^{r \times m}$ be the matrix whose rows are the rows that have been removed from $Q$ in step $m$ and let $\widetilde{Q}$ be the matrix obtained from $Q$ after the removal of these rows.
We then have, since $Q$ is orthogonal,
\begin{equation*}
	\widetilde{Q}^*\widetilde{Q} = I_{m} - Q_{\rm r}^*Q_{\rm r}.
\end{equation*}
So, by taking the Cholesky decomposition,
\begin{equation*}
	I_{m} - Q_{\rm r}^*Q_{\rm r} = S^*S,
\end{equation*}
we have that matrix $\widetilde{Q}S^{-1}$ is orthogonal, and we update $H = SH$.
We can further avoid the (costly) explicit multiplication $\widetilde{Q}S^{-1}$ by storing $S$ in matrix
\begin{equation*}
	S_{m} = \left[
		\begin{array}{cc}
			S_{m-1} \\
			& 1
		\end{array}
	\right]S^{-1},
\end{equation*}
which is only used in matrix-vector and matrix-matrix multiplications with vector and matrices of size $O(m)$, that is, of small size.

This procedure is implemented in our MATLAB version of the AAA algorithm in App.~\ref{app:setAAA}, which also shows how to rework the AAA algorithm to incorporate multiple functions, that is, the set-valued AAA algorithm.
It should be compared with the algorithm presented in \cite{nast16}.
The main cost of the algorithm is reduced to the Gram-Schmidt orthogonalization process of the long vectors of the Loewner matrix.

	
\section{Rational approximations for NEPs using AAA}\label{sec:lin}

In this section we show how the scalar rational functions, computed by AAA, can be used efficiently to obtain a rational approximation of the NEP. In particular, we will present linearizations that build on the CORK~\citep[][]{bemm15} and the TS-CORK~\citep[][]{limt17} frameworks and exploit possible low-rank terms. These frameworks allow that the eigenvalues of the linearizations can be computed efficiently by, for example, the rational Krylov method.

\subsection{The CORK framework}\label{sec:CORK-framework}
The starting point of the compact rational Krylov (CORK) method in~\cite{bemm15} is a matrix-valued function of the form
\begin{equation}\label{eq:P}
P(\lambda)=\sum_{i=0}^{k-1} (A_i-\lambda B_i)f_i(\lambda), \quad \mbox{with} \quad A_i,B_i\in\mathbb{C}^{n\times n},
\end{equation}
where $f_i\colon\mathbb{C}\rightarrow \mathbb{C}$ are  polynomial or rational functions satisfying the linear relation 
\begin{equation}\label{eq:linear relation}
(M-\lambda N)\, f(\lambda) =0 \qquad \text{with $\rank(M-\lambda N) = k-1$ for all $\lambda \in \C$}
\end{equation}
and $f(\lambda) = \begin{bmatrix}
f_0(\lambda) & \cdots  & f_{k-1}(\lambda)
\end{bmatrix}^T \neq 0$. 
Without much loss of generality, we further assume that $f_0(\lambda)\equiv 1$ has degree zero.
This assumption is indeed not restrictive, as it covers most of the important cases in applications including monomials, Chebyshev polynomials, orthogonal polynomials, Newton polynomials, rational Newton functions. For a more general setting and explicit examples of $M - \lambda N$, we refer to~\cite{bemm15}.


Given a matrix-valued function~\eqref{eq:P} satisfying \eqref{eq:linear relation}, the matrix pencil
\begin{equation}\label{eq:lin-CORK}
\mathcal{L}_P(\lambda) = 
\left[
	\begin{matrix}
		\begin{array}{ccc}
	A_0-\lambda B_0 & \cdots & A_{k-1}-\lambda B_{k-1} \\
	\hline
	\end{array}\\
(M-\lambda N)\otimes I_n
\end{matrix}\right],
\end{equation}
is called the CORK linearization of $P(\lambda)$. When $P(\lambda)$ is a matrix polynomial, the pencil \eqref{eq:lin-CORK} is a linearization in the usual sense~\citep[][]{gohberg_lancaster}, since it corresponds to a ``block minimal bases pencil'', see \cite{dlpv17,rvv16}.  
When $P(\lambda)$ is a rational matrix, it is not clear whether the pencil~\eqref{eq:lin-CORK} is a linearization of $P(\lambda)$ in the sense of \cite{amparan_dopico}. In any case, for our purposes, we only need to use that $P(\lambda)$ and $\mathcal{L}_P(\lambda)$ have the same eigenvalues and that the eigenvectors of $P(\lambda)$ can be easily recovered from those of $\mathcal{L}_P(\lambda)$ \cite[see][Corollary 2.4]{bemm15}.

The CORK linearization~\eqref{eq:lin-CORK} is of size $kn \times kn$ which can become quite large. Fortunately, its Kronecker structure can be exploited when computing its eigenvalues by Krylov methods; see, e.g., \cite[Algorithm 3]{bemm15} on how to efficiently use the rational Krylov method in this context.



\subsection{Extending CORK with AAA}

Let us now consider the NEP with $A(\lambda)$ as defined in~\eqref{eq:NEP}, that is,
\[
 A(\lambda)= P(\lambda) + \sum_{i=1}^s (C_i-\lambda D_i)g_i(\lambda).
\]
Using AAA, or its set-valued generalization, we can approximate each function $g_i(\lambda)$ on the region $\Sigma \subset \C$ as
\begin{equation}\label{eq:rat_approx}
g_i(\lambda)\approx r_i(\lambda) = \sum_{j=1}^{\ell_i} \frac{g_i(z_j^{(i)})\, \omega^{(i)}_j}{\lambda-z_j^{(i)}}\Big{/} \sum_{j=1}^{s\ell_i} \frac{\omega^{(i)}_j}{\lambda-z_j^{(i)}},
\end{equation}
where $\ell_i$ is the number of support points $z_j^{(i)}$ and weights $\omega^{(i)}_j$ for each $i=1,\ldots,s$. If some of the $g_i$ are approximated together by the set-valued AAA algorithm, the $z_j^{(i)}$ and $\omega^{(i)}_j$ are the same for their corresponding indices $i$. For now, we ignore this property. In any case, we can use the rational approximations $r_i(\lambda)$ to obtain an approximation of the NEP on the same region $\Sigma$:
\begin{equation}\label{eq:REP}
A(\lambda) \approx R(\lambda)=P(\lambda)+\sum_{i=1}^s (C_i-\lambda D_i)r_i(\lambda).
\end{equation}

We now show how to obtain a CORK-like linearization of $R(\lambda)$. If we assume that $P(\lambda)$ satisfies~\eqref{eq:P}, then 
by making use of Prop.~\ref{prop:state-space}, we can also write the rational part in~\eqref{eq:REP} in state-space form as
\begin{equation}\label{eq:R}
R(\lambda)=\sum_{i=0}^{k-1} (A_i-\lambda B_i)f_i(\lambda) + \sum_{i=1}^s  (C_i-\lambda D_i)\, a_i^T(E_i-\lambda F_i)^{-1} b_i
\end{equation}
for some vectors $a_i,b_i \in\mathbb{C}^{\ell_i}$ and the $\ell_i\times \ell_i$ matrices 
\[
E_i = \begin{bmatrix}
\omega_1 & \omega_2 & \cdots & \omega_{\ell_{i-1}} & \omega_{\ell_i} \\
 - z_1 & z_2 \\
& - z_2 & \ddots \\
& & \ddots & z_{\ell_{i-1}} \\
& & & -z_{\ell_{i-1}} & z_{\ell_i} 
\end{bmatrix} \qquad  \text{and} \qquad 
F_i = \begin{bmatrix}
0 & 0 & \cdots & 0 & 0 \\
1  & -1 \\
& 1 & \ddots \\
& & \ddots & -1 \\
& & &1 & -1
\end{bmatrix}.
\]
Next, introduce for $i=1,\ldots, s$ the vector-valued function
\begin{equation}\label{eq:rij}
R_i\colon \C \to \C^{\ell_i}, \qquad R_i(\lambda) = (E_i-\lambda F_i)^{-1}b_i.
\end{equation}
Assuming that $P(\lambda)$ satisfies~\eqref{eq:P} with $f_0(\lambda) =1$ and observing that $(E_i-\lambda F_i)R_i(\lambda) = b_i$ for all $i=1,\ldots,s$, we obtain the linear relation
\[
 \left[\begin{array}{c:c|c}
 \multicolumn{2}{c|}{M-\lambda N} & \begin{matrix} 0 & \cdots & 0 \end{matrix} \\ \hline
	\begin{matrix} -b_1 \\ \vdots \\ -b_s\end{matrix} & 
	\begin{matrix} 0 & \cdots & 0 \\  \vdots & & \vdots \\ 0  & \cdots & 0  \end{matrix} & \begin{matrix} E_1-\lambda F_1 & \\ & \ddots \\ & & E_s-\lambda F_s \end{matrix}
\end{array}\right] 
 \left[\begin{array}{c}
\begin{matrix} f_0(\lambda) \\ \hdashline f_1(\lambda) \\ \vdots \\ f_{k-1}(\lambda) \end{matrix} \\ \hline
\begin{matrix} R_1(\lambda) \\ \vdots \\ R_s(\lambda) \end{matrix}
\end{array}\right] = 0.
\]
Collecting the basis functions into the single vector
\begin{equation}\label{eq:Psi}
	f(\lambda) = \begin{bmatrix} f_0(\lambda) \\ \vdots \\ f_{k-1}(\lambda) \end{bmatrix}, \quad
\Psi(\lambda) = 
\left[
	\begin{array}{c}
		f(\lambda) \\ \hline R_1(\lambda) \\  \vdots \\ R_s(\lambda)
	\end{array}
\right],
\end{equation}
we arrive at the following result.

\begin{proposition}\label{prop:linear relation big}
Let $\Psi(\lambda)$ be the vector-valued function~\eqref{eq:Psi} with $f_i(\lambda)$ scalar functions satisfying~\eqref{eq:linear relation} such that $f_0(\lambda)=1$ and 
$R_i(\lambda)$ satisfying~\eqref{eq:rij}. If $\lambda\in\mathbb{C}$ is such that $E_i-\lambda F_i$ is invertible for all $i=1,\ldots, s$, then 
\begin{equation}\label{eq:linear relation big}
(\widehat{M}-\lambda\widehat{N})\Psi(\lambda)=0 \qquad \text{with} \quad  \widehat{M}-\lambda\widehat{N} =\left[\begin{array}{c|c}
M-\lambda N & 0 \\ \hline
\begin{matrix} -b & 0 \end{matrix} & E-\lambda F
\end{array}\right],
\end{equation}
where we used
\begin{equation}\label{eq:def_e_EF}
 b = \begin{bmatrix} b_1^T & \cdots & b_s^T \end{bmatrix}^T \qquad \text{and} \quad  \qquad E-\lambda F = \diag(E_1-\lambda F_1, \ldots, E_s-\lambda F_s).
\end{equation}
Furthermore, the pencil $\widehat{M}-\lambda\widehat{N}$ has full row rank for any $\lambda\in\mathbb{C}$ such that  $E_i-\lambda F_i$ is invertible for all $i=1,\ldots, s$.
\end{proposition}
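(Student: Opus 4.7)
My plan is to exploit the block lower triangular structure of $\widehat{M}-\lambda\widehat{N}$ and verify the two claims directly. For the identity $(\widehat{M}-\lambda\widehat{N})\Psi(\lambda)=0$, I would split $\Psi(\lambda)$ into its upper part $f(\lambda)$ and lower blocks $R_1(\lambda),\ldots,R_s(\lambda)$. The top block rows act on $\Psi(\lambda)$ as $(M-\lambda N)f(\lambda)$, which vanishes by the hypothesis~\eqref{eq:linear relation}. The $i$th group of bottom block rows produces $-b_i\,f_0(\lambda) + (E_i-\lambda F_i)R_i(\lambda)$; substituting $f_0(\lambda)=1$ and the definition~\eqref{eq:rij} of $R_i$ collapses this to $-b_i + b_i = 0$, as needed.

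For the full row rank claim, I would show that at any $\lambda$ for which every $E_i-\lambda F_i$ is invertible, the left nullspace of $\widehat{M}-\lambda\widehat{N}$ is trivial. Take a left nullvector $x^T=[\alpha^T \mid \beta_1^T,\ldots,\beta_s^T]$. Reading off the rightmost block columns gives $\beta_i^T(E_i-\lambda F_i)=0$ for each $i$, so invertibility of these pencils forces $\beta_i=0$. The remaining equations then reduce to $\alpha^T(M-\lambda N)=0$, and because~\eqref{eq:linear relation} makes $M-\lambda N$ a $(k-1)\times k$ pencil of rank $k-1$ (hence of full row rank), we conclude $\alpha=0$, and thus $x=0$.

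I do not foresee a serious obstacle: the block lower-triangular structure delivers both assertions essentially mechanically. The only care needed is the bookkeeping of the first column of the $[-b\mid 0]$ block, ensuring that $-b$ is paired precisely with $f_0$ so that the normalization $f_0\equiv 1$ makes the cancellation with $(E_i-\lambda F_i)R_i(\lambda)=b_i$ exact, and remembering the CORK convention under which $M-\lambda N$ is rectangular of size $(k-1)\times k$, so that ``full row rank'' is the correct restatement of the rank condition $\rank(M-\lambda N)=k-1$ in~\eqref{eq:linear relation}.
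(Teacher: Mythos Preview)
Your proposal is correct and follows essentially the same approach as the paper: the paper also verifies the identity blockwise (referring back to the computation immediately preceding the proposition, which uses $(M-\lambda N)f(\lambda)=0$ and $(E_i-\lambda F_i)R_i(\lambda)=b_i$ together with $f_0(\lambda)=1$), and deduces full row rank from the block lower-triangular structure with $M-\lambda N$ of full row rank and $E-\lambda F$ invertible. Your left-nullvector argument for the rank claim is a slightly more explicit rendering of the same reasoning.
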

\begin{proof}
The identity~\eqref{eq:linear relation big} was shown above since $f_0(\lambda)=1$. 
The second result is immediate since $M-\lambda N$ has full row rank by assumption and $E-\lambda F$ is only singular when  one of the $E_i-\lambda F_i$ is singular.
\end{proof}

In order to obtain a linearization of~\eqref{eq:R}, we first write it using~\eqref{eq:rij} equivalently as
\begin{align*}
R(\lambda)&=\sum_{i=0}^{k-1} (A_i-\lambda B_i) (f_i(\lambda) \cdot I_n) + \sum_{i=1}^s  (C_i-\lambda D_i)\, (a_i^T R_i(\lambda) \cdot I_n )\\
&=\sum_{i=0}^{k-1} (A_i-\lambda B_i)\, (f_i(\lambda) \otimes I_n) + \sum_{i=1}^s  [a_i^T \otimes (C_i-\lambda D_i)]\, (R_i(\lambda) \otimes I_n).
\end{align*}
Observe that this is a trivial rewriting of scalar multiplications in terms of Kronecker products. However, using the vector $\Psi(\lambda)$ as defined in~\eqref{eq:Psi}, it allows us to express the rational expression in~\eqref{eq:R} as
\begin{align*}
	\renewcommand*{\arraystretch}{.1}
&R(\lambda)=\begin{bmatrix} \begin{array}{c|c} \begin{matrix}  A_0-\lambda B_0 & \cdots & A_{k-1}-\lambda B_{k-1} \end{matrix} & \begin{matrix}  a_1^T\otimes (C_1-\lambda D_1) & \cdots & a_s^T\otimes (C_s-\lambda D_s) \end{matrix} \end{array} \end{bmatrix} \times \\
& \hspace{12cm} (\Psi(\lambda)\otimes I_n).
\end{align*}
Together with Prop.~\ref{prop:linear relation big}, this suggests the following CORK-like linearization of~\eqref{eq:R}.
\begin{definition}{\rm (CORK linearization for AAA rational approximation)}
Let $R(\lambda)$ be the rational approximation \eqref{eq:R} obtained by using the AAA algorithm or the set-valued AAA algorithm.
We define the pencil $\mathcal{L}_R(\lambda)$ as follows
\begin{equation}\label{eq:lin-CORK-rat}
\mathcal{L}_R(\lambda) = 
\left[\begin{array}{c} 
\begin{array}{c|c} \begin{matrix} A_0-\lambda B_0 & \cdots & A_{k-1}-\lambda B_{k-1} \end{matrix} & \begin{matrix} a_1^T\otimes (C_1-\lambda D_1) & \cdots & a_s^T\otimes (C_s-\lambda D_s) \end{matrix} \end{array}
 \\ \hline
\phantom{\Big{(}} (\widehat{M}-\lambda \widehat{N})\otimes I_n \phantom{\Big{(}} 
\end{array}
	\right],
\end{equation}
where the pencil $\widehat{M}-\lambda \widehat{N}$ has been defined in \eqref{eq:linear relation big}.
\end{definition}

The size of $\mathcal{L}_R(\lambda)$ is $(k+\sum_{i=1}^s \ell_i)n$. Fortunately, one can again exploit the Kronecker structure and show that the CORK algorithm can be applied to \eqref{eq:lin-CORK-rat}, as long as the shifts in the shift-and-invert steps of the rational Krylov method  are not poles of the rational interpolants \eqref{eq:rat_approx}.
Furthermore, as a special case of Theorem~\ref{thm:evals_LR_low-rank} below with full-rank matrices, any $\lambda\in\mathbb{C}$ that is not a pole of any of the rational interpolants \eqref{eq:rat_approx} is an eigenvalue of $R(\lambda)$ if and only if it is an eigenvalue of \eqref{eq:lin-CORK-rat}, and their associated right eigenvectors are easily related.
For the set-valued AAA approximation, we have that $E_i-\lambda F_i$ is the same for all $i$, as well as all $b_i$.
As a result, linearization \eqref{eq:lin-CORK-rat} becomes
\begin{equation*}
\mathcal{L}_R(\lambda) = 
\left[
	\begin{array}{cc}
\begin{matrix} A_0-\lambda B_0 & \cdots & A_{k-1}-\lambda B_{k-1} \end{matrix} & \sum_{i=1}^s a_i^T\otimes (C_i-\lambda D_i) \\ \hline
 M - \lambda N & 0 \\
 -b_1 \ \ 0 & E_1 - \lambda F_1
\end{array}
\right],
\end{equation*}
which is of size $(k+\ell_1)n$.

Conclusively, the CORK algorithm can be applied to \eqref{eq:lin-CORK-rat} for computing eigenvalues of $R(\lambda)$ that are not poles of the rational interpolants \eqref{eq:rat_approx} and their associated right eigenvectors.
In practice, we have noticed that this assumption is not very restrictive, since the AAA algorithm tends to place the poles outside the region of interest.

\subsection{Low-rank exploitation}

In several applications, the matrix coefficients of the  nonlinear valued function $G(\lambda)$ in \eqref{eq:G} are usually of low rank.
In this section, we show how the exploitation of these low ranks leads to a linearization of size smaller than that of $\mathcal{L}_R(\lambda)$.
This linearization generalizes the one used in \cite{dopi17,suba11}, which is valid when $P(\lambda)$ in \eqref{eq:P} is expressed using monomials, i.e., $f_i(\lambda)=\lambda^i$, to the more general setting used by CORK.

Suppose that the coefficients of the rational part in \eqref{eq:R} admit the following structure
\begin{equation}\label{eq:low_rank_Z_for_R}
 C_i - \lambda D_i = (\widetilde C_i - \lambda \widetilde D_i) \widetilde Z_i^* \quad \mbox{with} \quad  \widetilde C_i,  \widetilde D_i, \widetilde Z_i \in \C^{n \times k_i}, \quad \mbox{and} \quad  \widetilde Z_i^* \widetilde Z_i = I_{k_i}.
\end{equation}
Observe that this holds trivially for $\widetilde Z_i = I_{k_i}$ but in many problems $k_i$ is potentially much smaller than $n$.
In \cite{suba11}, $\widetilde C_i,  \widetilde D_i, \widetilde Z_i$ are the result of a rank revealing decomposition of $C_i - \lambda D_i$.

%
%
%

Introducing the matrices $\widetilde Z_i$ in the definition of $R(\lambda)$ in \eqref{eq:R}, we obtain
\begin{align*}
R(\lambda)&=\sum_{i=0}^{k-1} (A_i-\lambda B_i) f_i(\lambda) + \sum_{i=1}^s  (a_i^T R_i(\lambda)  ) \cdot (\widetilde C_i-\lambda \widetilde D_i) \widetilde Z_i^* \\
&=\sum_{i=0}^{k-1} (A_i-\lambda B_i)\, (f_i(\lambda) I_n) + \sum_{i=1}^s  [a_i^T \otimes (\widetilde C_i-\lambda \widetilde D_i)]\, [R_i(\lambda) \otimes I_{k_i}] \,  \widetilde Z_i^*,
\end{align*}
where we recall that $R_i(\lambda)=(E_i-\lambda F_i)^{-1}b_i$.
We can therefore write
\[
 R(\lambda) = \left[\begin{array}{cc}
\bsym A - \lambda \bsym B & \bsym C - \lambda \bsym D \end{array}\right] \cdot \bsym{\Psi}(\lambda)
\]
using the matrices
\begin{gather*}
 \bsym A = \begin{bmatrix} A_0 & \cdots & A_{k-1} \end{bmatrix}, \quad
 \bsym  B = \begin{bmatrix} B_0 & \cdots & B_{k-1} \end{bmatrix}, \\
 \bsym { C} =  \begin{bmatrix} a_1^T \otimes \widetilde C_1 & \cdots & a_s^T \otimes \widetilde C_s \end{bmatrix}, \quad
 \bsym { D} =  \begin{bmatrix} a_1^T \otimes \widetilde D_1 & \cdots & a_s^T \otimes \widetilde D_s \end{bmatrix}, \\
 f(\lambda) = \begin{bmatrix} f_0(\lambda) \\ \vdots \\ f_{k-1}(\lambda) \end{bmatrix}, \quad \bsym{\Psi}(\lambda) = \begin{bmatrix} f(\lambda) \otimes I_n \\ (R_1(\lambda) \otimes I_{k_1}) \widetilde Z_1^* \\ \vdots \\ (R_s(\lambda) \otimes I_{k_s}) \widetilde Z_s^* \end{bmatrix}.
\end{gather*}
Denoting by $\bsym O$ a matrix of all zeros (of suitable size), and using
\[
 \bsym {M} =  M \otimes I_n, \quad
 \bsym {N} =  N \otimes I_n, 
\]
where the pencil $M-\lambda N$ is the one in \eqref{eq:linear relation}, we  obtain from $(M-\lambda N) f(\lambda) = 0$ the identity
\[
 \left[\begin{array}{cc}
 \bsym M - \lambda \bsym N &  \bsym O \end{array}\right] \cdot \bsym{\Psi}(\lambda) = \bsym O.
\]
As before $(E_i-\lambda F_i)R_i(\lambda) = b_i$, whence
\[
 [ (E_i - \lambda F_i) \otimes I_{k_i} ] \, [ R_i(\lambda) \otimes I_{k_i} ] \, \widetilde Z_i^* = [b_i \otimes I_{k_i}] \widetilde Z_i^*.
\]
Therefore by assuming again that $f_0(\lambda) \equiv 1$ and introducing
\begin{gather*}
	\bsym {E} =  \diag (E_1 \otimes I_{k_1},  \ldots, E_s \otimes I_{k_s} ), \quad
 \bsym {F} =  \diag (F_1 \otimes I_{k_1},  \ldots, F_s \otimes I_{k_s} ), \\ 
e_1^T = \begin{bmatrix} 1 & 0 & \cdots & 0 \end{bmatrix} \in \R^{k}, \quad \bsym{Z}^* = \begin{bmatrix} -(b_1 \otimes I_{k_1})\widetilde{Z}_1^* \\ \vdots \\ -(b_s \otimes I_{k_s})  \widetilde{Z}_s^*\end{bmatrix} (e_1^T \otimes I_n)
\end{gather*}
we obtain the identity
\[
 \left[\begin{array}{cc}
  \bsym Z^* & \bsym E - \lambda \bsym F \end{array}\right] \cdot \bsym{\Psi}(\lambda) = \bsym O.
\]

Putting all the identities from above together, we obtain the following square matrix of size $\widetilde d = nk+ \sum_{i=1}^s \ell_i k_i$:
\begin{equation}\label{eq:trimmed_LR}
\mathcal{\widetilde L}_R(\lambda) \, \bsym{\Psi}(\lambda) = \begin{bmatrix} R(\lambda) \\ \bsym{O} \end{bmatrix} \qquad \text{with }
\mathcal{\widetilde L}_R(\lambda) =\left[\begin{array}{cc}
\bsym A - \lambda \bsym B & \bsym C - \lambda \bsym D \\
\bsym M - \lambda \bsym N & \bsym{O}  \\
 \bsym Z^*  & \bsym E - \lambda \bsym F
\end{array}\right] .
\end{equation}

In Theorem~\ref{thm:evals_LR_low-rank} below we show that, as long as $\lambda$ is not a pole of any of the rational functions $r_i(\lambda)$ in~\eqref{eq:REP}, $\mathcal{\widetilde L}_R(\lambda)$ is indeed a linearization for $R(\lambda)$ in the sense that we can use it to compute the eigenpairs of $R(\lambda)$. Observe that $\widetilde d$ is never larger than $d=n(k+\sum_{i=1}^s \ell_i)$, the size of $\mathcal{L}_R(\lambda)$. Hence, $\mathcal{\widetilde L}_R(\lambda)$ is a trimmed linearization that effectively exploits the low-rank terms in the rational part of $R(\lambda)$. It is also possible to exploit low-rank terms in $P(\lambda)$ as is done in~\cite{bemm15}. However, as this would complicate notation and the gain in size is typically less significant, we do not pursue this here.

Together with Theorem~\ref{thm:evals_LR_low-rank}, we also have in Theorem~\ref{thm:UL_Z} an explicit block-UL factorization of $\mathcal{\widetilde L}_R(\lambda)$. The proof of both these results is fairly standard and is therefore devoted to the appendix---in particular, we refer to similar results in~\cite{suba11} for rational terms $R_i(\lambda)$ with explicit state-space representation, in~\cite{bemm15} for $P(\lambda)$ in CORK  form, and in~\cite{dopi17} for $P(\lambda)$ in companion form combined with explicit state space for $R_i(\lambda)$. However, a compact representation that is a combination of general $P(\lambda)$ in CORK form and rational terms stemming from AAA is new. 

The theorems are stated for a certain permuted version of the columns of $\mathcal{\widetilde L}_R(\lambda)$. By assumption, the $(k-1) \times k$ pencil $M-\lambda N$ has rank $k-1$. Hence, there exists a permutation $\Pi \in \R^{k \times k}$, possibly depending on $\lambda$, such that
\[
 (M - \lambda N) \Pi =: \begin{bmatrix} m_0 - \lambda n_0 & M_1 - \lambda N_1 \end{bmatrix} \qquad \text{with $M_1 - \lambda N_1$ nonsingular.}
\]
Denoting $\bsym{\Pi} = \Pi \otimes I_n$, we can also apply this permutation block-wise to the first $nk$ columns of $\mathcal{\widetilde L}_R(\lambda)$. We then obtain
\begin{equation}\label{eq:trimmed_LR_permuted}
\left[\begin{array}{c|c}
 \bsym A - \lambda \bsym B & \bsym C - \lambda \bsym D \\
 \bsym M - \lambda \bsym N & \bsym{O}  \\
  \bsym Z^*  & \bsym E - \lambda \bsym F
 \end{array}\right] 
 \left[
	 \begin{array}{cc}
  \bsym{\Pi} \\\hline  & I 
 \end{array}
 \right]=
 \left[\begin{array}{cc|c}
  \bsym A_0 - \lambda \bsym B_0 & \bsym A_1 - \lambda \bsym B_1 & \bsym C - \lambda \bsym D \\
  \bsym M_0 - \lambda \bsym N_0 & \bsym M_1 - \lambda \bsym N_1 &  \bsym{O}  \\
   \bsym Z^*_0 &  \bsym Z^*_1 & \bsym E - \lambda \bsym F
  \end{array}\right],
\end{equation}
where $I$ denotes an identity matrix of suitable size, and with
\[
 \bsym M_0 = m_0 \otimes I_n, \quad  \bsym n_0 = n_0 \otimes I_n,   \quad \bsym M_1 = M_1 \otimes I_n, \quad  \bsym N_1 = N_1 \otimes I_n.
\]
The other block matrices are partitioned accordingly. This means $\bsym A_0 = A_j$ and $\bsym B_0 = B_j$ for some $j$ that corresponds to the column that $\bsym{\Pi}$ has permuted to the first position. 

As mentioned above, one of the results is a block UL factorization. Amongst others, it is key for performing efficiently the shift-and-invert steps of the rational Krylov method when computing the eigenvalues of $\widetilde{\mathcal{L}}_R(\lambda)$.
\begin{theorem}\label{thm:UL_Z}
Let $\mathcal{\widetilde L}_R(\lambda)$ be the pencil in~\eqref{eq:trimmed_LR} for the rational matrix $R(\lambda)$ in~\eqref{eq:R} with the low-rank structure~\eqref{eq:low_rank_Z_for_R}. 
If $\mu\in\mathbb{C}$ is such that all $E_1-\mu F_1, \ldots, E_s-\mu F_s$ are nonsingular, then using the block matrices as defined in~\eqref{eq:trimmed_LR_permuted}, the following block-UL decomposition holds:
\[
\mathcal{\widetilde L}_R(\mu) \, \mathcal{P} = \mathcal{U}(\mu)\,\mathcal{L}(\mu),
\]
where (empty blocks are zero and $\rho = {\sum_{i=1}^s \ell_i k_i}$)
\begin{align*}
\mathcal{P} &= \begin{bmatrix} \bsym{\Pi} \\  & I \end{bmatrix} \\
\mathcal{U}(\mu)&=\begin{bmatrix}
I_n & [ \bsym A_1 - \mu \bsym B_1 - \bsym Z_1^* (\bsym C - \mu \bsym D)] \, [\bsym M_1 - \mu \bsym N_1]^{-1} & (\bsym C - \mu \bsym D) (\bsym E - \mu \bsym F)^{-1} \\
 &  I_{(k-1)n}    \\
 &  & I_\rho 
\end{bmatrix}, \\
\mathcal{L}(\mu)&=\begin{bmatrix}
\alpha(\mu)^{-1} \, R(\mu)  \\
\bsym M_0-\mu \bsym N_0 & \bsym M_1-\mu \bsym N_1   \\
\bsym Z_0^* & \bsym Z_1^* & \bsym E-\mu \bsym F
\end{bmatrix}, \quad \alpha(\mu) =  e_1^T \Pi^T f(\mu) \neq 0.
\end{align*}
In addition,
\begin{equation}\label{eq:det_L_R}
 \alpha(\mu)^n \det \mathcal{\widetilde L}_R(\mu) = \det R(\mu) \, (\det ( M_1-\mu  N_1))^{k-1} \, \prod_{i=1}^s (\det (E_i - \mu F_i))^{\ell_i}.
\end{equation}
\end{theorem}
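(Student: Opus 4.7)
The plan is to verify the UL factorization \(\mathcal{\widetilde L}_R(\mu)\mathcal{P}=\mathcal{U}(\mu)\mathcal{L}(\mu)\) directly, block-by-block, and then read off the determinant identity as a consequence. First I would record the permuted form \eqref{eq:trimmed_LR_permuted} and note that, by the choice of \(\Pi\) and the hypothesis on \(\mu\), both pivots \(\bsym{M}_1-\mu\bsym{N}_1=(M_1-\mu N_1)\otimes I_n\) and \(\bsym{E}-\mu\bsym{F}=\diag((E_i-\mu F_i)\otimes I_{k_i})\) are invertible, so the inverses appearing in \(\mathcal{U}(\mu)\) are well defined.

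Because \(\mathcal{L}(\mu)\) reproduces the last two block rows of \(\mathcal{\widetilde L}_R(\mu)\mathcal{P}\) verbatim while \(\mathcal{U}(\mu)\) is block-identity on those rows, the second and third block rows of the product match trivially. The \((1,3)\) and \((1,2)\) blocks then reduce to Schur-complement conditions: the definition \(U_{13}=(\bsym{C}-\mu\bsym{D})(\bsym{E}-\mu\bsym{F})^{-1}\) yields \(U_{13}(\bsym{E}-\mu\bsym{F})=\bsym{C}-\mu\bsym{D}\), while the expression for \(U_{12}\) is the unique one satisfying \(U_{12}(\bsym{M}_1-\mu\bsym{N}_1) = \bsym{A}_1-\mu\bsym{B}_1 - U_{13}\bsym{Z}_1^*\).

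The key step is identifying the \((1,1)\) block of \(\mathcal{L}(\mu)\) with \(\alpha(\mu)^{-1}R(\mu)\). Matching the product forces \(L_{11}=\bsym{A}_0-\mu\bsym{B}_0 - U_{12}(\bsym{M}_0-\mu\bsym{N}_0) - U_{13}\bsym{Z}_0^*\), and rather than expanding this directly I would test against the vector \(\bsym{\Psi}(\mu)\) from \eqref{eq:Psi}. The derivation preceding the theorem already shows \(\mathcal{\widetilde L}_R(\mu)\bsym{\Psi}(\mu)=[R(\mu);\bsym{O};\bsym{O}]\); inserting \(\mathcal{P}\mathcal{P}^{-1}\) and using that \(\mathcal{U}(\mu)^{-1}\) fixes vectors whose last two blocks vanish, we get \(\mathcal{L}(\mu)\tilde{\bsym{\Psi}}(\mu)=[R(\mu);\bsym{O};\bsym{O}]\) with \(\tilde{\bsym{\Psi}}(\mu):=\mathcal{P}^{-1}\bsym{\Psi}(\mu)\). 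Now the first \(n\) rows of \(\tilde{\bsym{\Psi}}(\mu)\) equal \(\alpha(\mu)\,I_n\), since \(\mathcal{P}^{-1}\) permutes \(f(\mu)\otimes I_n\) into \((\Pi^T f(\mu))\otimes I_n\) whose first entry is \(e_1^T\Pi^T f(\mu)=\alpha(\mu)\); a short Kronecker computation, using \((M-\mu N)f(\mu)=0\) and the identity \([\bsym{Z}^*\mid \bsym{E}-\mu\bsym{F}]\bsym{\Psi}(\mu)=\bsym{O}\) (the low-rank analogue of Prop.~\ref{prop:linear relation big}, which follows from \(f_0(\mu)=1\) and \((E_i-\mu F_i)R_i(\mu)=b_i\)), confirms that the last two block rows of \(\mathcal{L}(\mu)\tilde{\bsym{\Psi}}(\mu)\) vanish. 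The first block row then collapses to \(L_{11}\,\alpha(\mu)=R(\mu)\), yielding \(L_{11}=\alpha(\mu)^{-1}R(\mu)\) as soon as \(\alpha(\mu)\neq 0\). That non-vanishing is itself a quick argument: decomposing \(\Pi^T f(\mu)=[\alpha(\mu);\tilde f(\mu)]\), the identity \((m_0-\mu n_0)\alpha(\mu)+(M_1-\mu N_1)\tilde f(\mu)=0\) combined with invertibility of \(M_1-\mu N_1\) would force \(f(\mu)=0\) if \(\alpha(\mu)=0\), contradicting \(f_0(\mu)=1\).

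Finally, the determinant identity follows from \(\det\mathcal{\widetilde L}_R(\mu)\cdot\det\mathcal{P}=\det\mathcal{U}(\mu)\cdot\det\mathcal{L}(\mu)\): since \(\mathcal{U}(\mu)\) has unit diagonal and \(\mathcal{L}(\mu)\) is block lower triangular with diagonal blocks \(\alpha(\mu)^{-1}R(\mu)\), \((M_1-\mu N_1)\otimes I_n\), and \(\diag((E_i-\mu F_i)\otimes I_{k_i})\), the Kronecker determinant formula applied block-wise together with \(|\det\mathcal{P}|=1\) delivers \eqref{eq:det_L_R}. I expect the main obstacle to be the dimensional bookkeeping with the Kronecker products and the column permutation \(\Pi\); once one commits to testing against \(\bsym{\Psi}(\mu)\), the algebra for \(L_{11}\) collapses and no direct manipulation of the Schur complements \(U_{12}\) and \(U_{13}\) is needed.
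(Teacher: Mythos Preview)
Your proposal is correct and follows essentially the same strategy as the paper. The paper packages the Schur-complement step into a short auxiliary lemma (if $G\begin{bsmallmatrix}X_1\\X_2\end{bsmallmatrix}=\begin{bsmallmatrix}Y_1\\O\end{bsmallmatrix}$ with $G_{22}$ and $X_1$ invertible, then $G$ has a block-UL factorization with $L_{11}=Y_1X_1^{-1}$) and applies it to $G=\mathcal{\widetilde L}_R(\mu)\mathcal{P}$ and $\begin{bsmallmatrix}X_1\\X_2\end{bsmallmatrix}=\mathcal{P}^T\bsym{\Psi}(\mu)$, whereas you spell out the same computation block-by-block and then test against $\bsym{\Psi}(\mu)$; the identification $L_{11}=\alpha(\mu)^{-1}R(\mu)$ via the identity $\mathcal{\widetilde L}_R(\mu)\bsym{\Psi}(\mu)=[R(\mu);\bsym{O}]$ and the argument that $\alpha(\mu)\neq 0$ from the invertibility of $M_1-\mu N_1$ are identical in both. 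One small remark: writing $|\det\mathcal{P}|=1$ leaves a possible sign in \eqref{eq:det_L_R}; since $\mathcal{P}=\diag(\Pi\otimes I_n,I)$ you have $\det\mathcal{P}=(\det\Pi)^n$, so it is worth tracking that sign explicitly rather than discarding it.
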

\begin{proof} See appendix~\ref{app:proof_linearization}. \end{proof}

Next, we have the main result for the linearization: the relation of the eigenvalues (and their algebraic and geometric multiplicities) and eigenvectors of the rational matrix $R(\lambda)$ with those of the matrix trimmed pencil $\mathcal{\widetilde L}_R(\lambda)$.
\begin{theorem}\label{thm:evals_LR_low-rank}
Let $\mathcal{\widetilde L}_R(\lambda)$ be the pencil in~\eqref{eq:trimmed_LR} for the rational matrix $R(\lambda)$ in~\eqref{eq:R} with $f_0(\lambda) \equiv 1$ and the low-rank structure~\eqref{eq:low_rank_Z_for_R}.  Let $\lambda_0\in\mathbb{C}$ be such that all $E_1-\lambda_0 F_1, \ldots, E_s-\lambda_0 F_s$ are nonsingular. Denote $\rho = \sum_{i=1}^s \ell_i k_i$.
\begin{enumerate}[(a)]
\item \label{item_a_thm:evals_LR_low-rank} If $x\in\mathbb{C}^{n}$ is an eigenvector of $R(\lambda)$ with eigenvalue $\lambda_0$, then $\bsym{\Psi}(\lambda_0)x \in\mathbb{C}^{kn+\rho}$ is an eigenvector of $\mathcal{\widetilde L}_R(\lambda)$ with eigenvalue $\lambda_0$.
\item \label{item_b_thm:evals_LR_low-rank} If $z\in\mathbb{C}^{kn +\rho}$ is an eigenvector of $\mathcal{\widetilde L}_R(\lambda)$ with eigenvalue $\lambda_0$, then $z=\bsym{\Psi}(\lambda_0) x$ for some eigenvector $x\in\mathbb{C}^n$ of $R(\lambda)$ with eigenvalue $\lambda_0$.
\item \label{item_c_thm:evals_LR_low-rank} The algebraic and geometric multiplicities of $\lambda_0$ as an eigenvalue of $\mathcal{\widetilde L}_R(\lambda)$ and as an eigenvalue of $R(\lambda)$ are the same.
\end{enumerate}
\end{theorem}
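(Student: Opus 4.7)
The plan is to exploit two ingredients already in place: the identity $\mathcal{\widetilde L}_R(\lambda)\,\bsym{\Psi}(\lambda) = [R(\lambda);\, \bsym{O}]$ established in~\eqref{eq:trimmed_LR}, and the block-UL factorization from Theorem~\ref{thm:UL_Z}, valid at $\lambda_0$ precisely because all $E_i-\lambda_0 F_i$ are nonsingular. Part~(a) is then essentially a direct computation: right-multiplying~\eqref{eq:trimmed_LR} by an eigenvector $x$ of $R$ and evaluating at $\lambda_0$ yields $\mathcal{\widetilde L}_R(\lambda_0)\bsym{\Psi}(\lambda_0) x = 0$; nonvanishing of $\bsym{\Psi}(\lambda_0) x$ follows from $f_0\equiv 1$, since the first $n$ entries of $f(\lambda_0)\otimes x$ are exactly $x\neq 0$.

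For part~(b), I would use $\mathcal{\widetilde L}_R(\lambda_0)\mathcal{P} = \mathcal{U}(\lambda_0)\mathcal{L}(\lambda_0)$ from Theorem~\ref{thm:UL_Z}. Since $\mathcal{U}(\lambda_0)$ is unit block upper triangular, it is invertible, so $\ker\mathcal{\widetilde L}_R(\lambda_0) = \mathcal{P}\,\ker\mathcal{L}(\lambda_0)$. The block lower-triangular factor $\mathcal{L}(\lambda_0)$ has lower diagonal blocks $\bsym{M}_1-\lambda_0\bsym{N}_1$ and $\bsym{E}-\lambda_0\bsym{F}$, which are invertible by the choice of permutation $\Pi$ and by hypothesis, respectively. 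Hence any $v = (v_0, v_1, v_2)\in\ker\mathcal{L}(\lambda_0)$ is uniquely determined by its top block $v_0\in\C^n$, which satisfies $R(\lambda_0) v_0 = 0$. Given a candidate eigenvector $z$, I would set $x = v_0/\alpha(\lambda_0)$ with $v = \mathcal{P}^{-1} z$, and observe that $\bsym{\Psi}(\lambda_0) x$ also lies in $\ker\mathcal{\widetilde L}_R(\lambda_0)$ by part~(a); its corresponding top block under $\mathcal{P}^{-1}$ is exactly $\alpha(\lambda_0)\,x = v_0$. Uniqueness of kernel vectors in terms of the top block then forces $z = \bsym{\Psi}(\lambda_0) x$.

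For part~(c), the equality of geometric multiplicities is immediate from parts~(a) and~(b): the map $x\mapsto\bsym{\Psi}(\lambda_0) x$ is a linear bijection from $\ker R(\lambda_0)$ to $\ker\mathcal{\widetilde L}_R(\lambda_0)$, with injectivity again ensured by $f_0\equiv 1$. For the algebraic multiplicities I would read off the determinant identity~\eqref{eq:det_L_R}: the factors $\alpha(\mu)^n$, $(\det(M_1-\mu N_1))^{k-1}$ and $\prod_{i}(\det(E_i-\mu F_i))^{\ell_i}$ are all nonzero at $\mu=\lambda_0$, so they contribute trivially to the order of vanishing, and the orders of $\det\mathcal{\widetilde L}_R(\mu)$ and $\det R(\mu)$ at $\lambda_0$ must coincide.

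The main obstacle I anticipate is the bookkeeping around the permutation $\Pi$. The natural top block of $z$ is not the top block of $\mathcal{P}^{-1} z$, and the scaling factor $\alpha(\lambda_0) = e_1^T\Pi^T f(\lambda_0)$ must be threaded carefully through the identification between the recovered $v_0$ and the eigenvector $x$. Once this is pinned down, the rest reduces to standard block-triangular kernel analysis combined with an order-of-vanishing argument based on~\eqref{eq:det_L_R}.
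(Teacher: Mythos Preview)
Your proposal is correct. Parts~(a) and~(c) match the paper's proof essentially verbatim: part~(a) is the direct computation from~\eqref{eq:trimmed_LR} plus the observation that the first $n$ entries of $\bsym{\Psi}(\lambda_0)x$ equal $x$; part~(c) reads off algebraic multiplicities from~\eqref{eq:det_L_R} and obtains equality of geometric multiplicities from the bijection $x\mapsto\bsym{\Psi}(\lambda_0)x$.

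Part~(b) is where you genuinely diverge. The paper does \emph{not} invoke the UL factorization; instead it works directly with the block rows of $\mathcal{\widetilde L}_R(\lambda_0)$. From the second block row $(\bsym{M}-\lambda_0\bsym{N})y_{AB}=0$ and a rank--nullity count, it concludes $y_{AB}=(f(\lambda_0)\otimes I_n)x$ for some $x$; the third block row then forces $y_{CD}$ to equal the rational part of $\bsym{\Psi}(\lambda_0)x$ by inverting each $E_i-\lambda_0 F_i$. This avoids the permutation $\Pi$ and the scalar $\alpha(\lambda_0)$ entirely. Your route via Theorem~\ref{thm:UL_Z} is equally valid and arguably more systematic once that factorization is available: the block-triangular structure of $\mathcal{L}(\lambda_0)$ makes the kernel parametrization by the top block immediate, and your uniqueness argument cleanly identifies $z$ with $\bsym{\Psi}(\lambda_0)x$. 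The trade-off is exactly the bookkeeping you flag: you must track $\mathcal{P}^T\bsym{\Psi}(\lambda_0)$ and the scaling $\alpha(\lambda_0)$, whereas the paper's direct argument stays in the original block coordinates. Neither approach is more general; the paper's is slightly more self-contained, yours reuses Theorem~\ref{thm:UL_Z} to minimize fresh computation.
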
	
\begin{proof} See appendix~\ref{app:proof_linearization}. \end{proof}

\if 0

\subsection{The TS-CORK framework}

A generalization of the CORK method has been recently introduced \cite{limt17} for polynomial or rational matrices of the form
\begin{equation}\label{eq:TS-CORK poly}
P(\lambda) = \sum_{i=0}^{k_1-1} \sum_{j=0}^{k_2-1} (A_{ij}-\lambda B_{ij})\phi_i(\lambda)\psi_j(\lambda), \quad \mbox{with} \quad A_{ij},B_{ij}\in\mathbb{C}^{n\times n},
\end{equation}
where $\phi_i,\psi_j:\mathbb{C}\rightarrow\mathbb{C}$ are polynomial or rational functions satisfying linear relations
\begin{equation}\label{eq:linear relation TS-CORK}
(M_\phi-\lambda N_\phi)\begin{bmatrix}
\phi_0(\lambda) \\ \vdots \\ \phi_{k_1-1}(\lambda)
\end{bmatrix}=0 \quad \mbox{and} \quad 
(M_\psi-\lambda N_\psi)\begin{bmatrix}
\psi_0(\lambda) \\ \vdots \\ \psi_{k_2-1}(\lambda)
\end{bmatrix}=0,
\end{equation}
for some matrix pencils $M_\phi-\lambda N_\phi \in\mathbb{C}[\lambda]^{(k_1-1)\times k_1}$ and $M_\psi-\lambda N_\psi \in\mathbb{C}[\lambda]^{(k_2-1)\times k_2}$, which have full row rank for any $\lambda\in\mathbb{C}$. 
Similarly to what we did for the CORK framework, we assume additionally $\phi_0(\lambda)=\psi_0(\lambda)=1$ to simplify the exposition.

Given a matrix valued function of the form \eqref{eq:TS-CORK poly}, we will refer to the matrix pencil
\begin{equation}\label{eq:TS-CORK lin}
\mathcal{L}_S(\lambda)=\left[\begin{array}{c|c}
M_0(\lambda) & (M_\psi-\lambda N_\psi)^T\otimes I_n \\ \hline
(M_\phi-\lambda N_\phi)\otimes I_n & 0
\end{array}\right],
\end{equation}
where $M_0(\lambda)$ is any pencil satisfying
\begin{equation}\label{eq:condition on M}
\begin{bmatrix}
\psi_0(\lambda)I_n & \cdots & \psi_{k_2-1}(\lambda)I_n
\end{bmatrix}M_0(\lambda)
\begin{bmatrix}
\phi_0(\lambda)I_n \\ \vdots \\ \phi_{k_1-1}(\lambda)I_n
\end{bmatrix}=P(\lambda),
\end{equation}
as a generalized CORK linearization of \eqref{eq:TS-CORK poly}.
As with the CORK linearization, one can prove rigorously that \eqref{eq:TS-CORK lin} is a linearization in the sense of \cite{gohberg_lancaster} when $P(\lambda)$ is a matrix polynomial by using \cite[Theorem 15]{rvv16}, together with \eqref{eq:linear relation TS-CORK} and  \eqref{eq:condition on M}.
Nonetheless, in this work, we only need to know that $\mathcal{L}_S(\lambda)$ and $P(\lambda)$ have the same eigenvalues and that right and left eigenvectors of $P(\lambda)$ are easily recovered from those of $\mathcal{L}_S(\lambda)$ \cite[Theorem 2.5]{limt17}.

In many applications,  \eqref{eq:TS-CORK poly} takes the simpler form
\begin{equation}\label{eq:TS-CORK poly2}
P(\lambda) = \sum_{i=0}^{k-1} (A_i-\lambda B_i)\phi_i(\lambda)\psi_i(\lambda), \quad \mbox{with} \quad A_i,B_i\in\mathbb{C}^{n\times n}.
\end{equation}
In this case, one may take
\[
M_0(\lambda) = \begin{bmatrix} A_0-\lambda B_0 \\ & \ddots \\ & & A_{k-1}-\lambda B_{k-1} \end{bmatrix},
\]
for obtaining a linearization of \eqref{eq:TS-CORK poly2}.

By exploiting the Kronecker structure of the pencil \eqref{eq:TS-CORK lin}, the generalized CORK algorithm runs efficiently the rational Krylov method applied to \eqref{eq:TS-CORK lin}.
This approach allows for building a Krylov space with the transpose of the linearization, which is needed for computing left eigenvectors  of \eqref{eq:NEP}-\eqref{eq:G}.
For these applications, the compact two-sided Krylov (two-sided CORK) family of methods have been recently introduced \cite{limt17}.
 
%


\subsection{A generalized CORK linearization for the rational matrix $R(\lambda)$}

\begin{bv}
Where does this go?
\end{bv}

In this section, we assume that the nonlinear matrix valued function \eqref{eq:NEP}-\eqref{eq:G} can be written as
\[
A(\lambda) = P(\lambda) +\sum_{i=1}^{r_1} (C_i-\lambda D_i)g_i(\lambda) + \sum_{i=1}^{r_2} (\widehat{C}_i-\lambda \widehat{D}_i) h_i(\lambda),
\]
with $P(\lambda)$ as in \eqref{eq:TS-CORK poly} and $r_1+r_2=r$, and where $g_i,h_i:\mathbb{C}\rightarrow\mathbb{C}$ are nonlinear functions.
In other words, we assume that the polynomial/rational part fits in the generalized CORK framework, and the purely nonlinear term is split in two components. 

Applying the AAA algorithm to each $g_i(\lambda)$ and each $h_i(\lambda)$ separately and using Proposition~\ref{prop:state-space}, we obtain the rational approximation
\begin{equation}\label{eq:R-TS-CORK}
A(\lambda) \approx R(\lambda) :=
P(\lambda)+\sum_{i=1}^{r_1} (C_i-\lambda D_i)a_i^T(E_i-\lambda F_i)b_i  +\sum_{i=1}^{r_2} (\widehat{C}_i-\lambda \widehat{D}_i)\widehat{a}_i^T(\widehat{E}_i-\lambda \widehat{F}_i)\widehat{b}_i,
\end{equation}
for some vectors $a_i,b_i\in\mathbb{C}^{s_i}$ and $\widehat{a}_i,\widehat{b}_i\in\mathbb{C}^{\widehat{s}_i}$, and some matrices $E_i,F_i\in\mathbb{C}^{s_i\times s_i}$ and $\widehat{E}_i,\widehat{F}_i\in\mathbb{C}^{\widehat{s}_i\times \widehat{s}_i}$.
Then, introducing the following rational functions
\begin{align}
\label{eq:rij-TS}&(E_i-\lambda F_i)^{-1}b_i=:\begin{bmatrix}
r_{i1}(\lambda) \\ \vdots \\ r_{is_i}(\lambda)
\end{bmatrix}=:R_i(\lambda) \quad \mbox{for }i=1,\hdots,r_1, \quad \mbox{and}\\
\label{eq:rhatij-TS}&(\widehat{E}_i-\lambda \widehat{F}_i)^{-T}\widehat{a}_i=:\begin{bmatrix}
\widehat{r}_{i1}(\lambda) \\ \vdots \\ \widehat{r}_{i\widehat{s}_i}(\lambda)
\end{bmatrix}=:\widehat{R}_i(\lambda), \quad \mbox{for }i=1,\hdots,r_2,
\end{align}
we can write \eqref{eq:R-TS-CORK} alternatively as
\begin{equation}\label{eq:R-TS-CORK2}
R(\lambda)=P(\lambda)+ \sum_{i=1}^{r_1}\sum_{j=1}^{s_i}a_{ij}(C_i-\lambda D_i)r_{ij}(\lambda)\psi_0(\lambda) + \sum_{i=1}^{r_2}\sum_{j=1}^{\widehat{s}_i}\widehat{b}_{ij}(\widehat{C}_i-\lambda \widehat{D}_i)\phi_0(\lambda)\widehat{r}_{ij}(\lambda),
\end{equation}
where we recall that we are assuming $\phi_0(\lambda)=\psi_0(\lambda)=1$, and where $a_{ij}$ and $\widehat{b}_{ij}$ denotes the $j$th components of, respectively, the vectors $a_i$ and $\widehat{b}_i$.
Finally, staking all the scalar functions in  the two vectors 
\begin{equation}\label{eq:Phi and Psi}
\Phi(\lambda):= \begin{bmatrix}
\phi_0(\lambda) \\ \vdots \\ \phi_{k_1-1}(\lambda) \\ R_1(\lambda) \\  \vdots \\ R_{r_1}(\lambda)
\end{bmatrix} \quad \mbox{and} \quad 
\Psi(\lambda):= \begin{bmatrix}
\psi_0(\lambda) \\ \vdots \\ \psi_{k_2-1}(\lambda) \\ \phantom{\Big{(}}\widehat{R}_1(\lambda)\phantom{\Big{(}} \\  \vdots \\ \widehat{R}_{r_2}(\lambda)
\end{bmatrix},
\end{equation}
and introducing the matrices $E-\lambda F:=\mathrm{diag}(E_1-\lambda F_1,\hdots,E_{r_1}-\lambda F_{r_1})$ and $\widehat{E}-\lambda \widehat{F}:=\mathrm{diag}(\widehat{E}_1-\lambda \widehat{F}_1,\hdots,\widehat{E}_{r_2}-\lambda \widehat{F}_{r_2})$, and the vectors $b^T=\begin{bmatrix} b_1^T & \cdots & b_{r_1}^T \end{bmatrix}$ and $a^T=\begin{bmatrix} \widehat{a}_1^T & \cdots & \widehat{a}_{r_2}^T \end{bmatrix}$, we find the following linear relations.
\begin{proposition}\label{prop:linear relation big TS}
Let $\phi_i(\lambda)$ and $\psi_i(\lambda)$ be some scalar functions satisfying \eqref{eq:linear relation TS-CORK}, let $r_{ij}(\lambda)$ and $\widehat{r}_{ij}(\lambda)$ be the rational functions defined, respectively, in \eqref{eq:rij-TS} and \eqref{eq:rhatij-TS}, and let $\Phi(\lambda)$ and $\Psi(\lambda)$ be the vectors in \eqref{eq:Phi and Psi}.
If $\lambda\in\mathbb{C}$ is not an eigenvalue of $E-
\lambda F$ or $\widehat{E}-\lambda \widehat{F}$, then
\begin{align}\label{eq:linear relation big TS-CORK}
&(\widehat{M}_\phi-\widehat{N}_\phi)\Phi(\lambda):=
\left[\begin{array}{c|c}
M_\phi-  \lambda N_\phi & 0 \\ \hline 
\begin{matrix} -b & 0 \end{matrix} & E-\lambda F
\end{array}\right]\Phi(\lambda)=0, \quad \mbox{and}\\
&(\widehat{M}_\psi-\lambda \widehat{N}_\psi)\Psi(\lambda):=
\left[\begin{array}{c|c}
M_\psi-\lambda N_\psi & 0 \\ \hline 
\begin{matrix} -\widehat{a} & 0 \end{matrix} & \phantom{\Big{(}} (\widehat{E}-\lambda \widehat{F})^T \phantom{\Big{(}}
\end{array}\right]\Psi(\lambda)=0,
\end{align}
Furthermore, the pencils $\widehat{M}_\phi-\lambda \widehat{N}_\phi$ and $\widehat{M}_\psi-\lambda \widehat{N}_\psi$ have full row rank for any $\lambda$ that is not an eigenvalue of $E-\lambda F$ or $\widehat{E}-\lambda \widehat{F}$.
\end{proposition}

\smallskip

Let us consider, finally, any matrix pencil $M_0(\lambda)$ satisfying the condition \eqref{eq:condition on M}, and let us define the following matrix pencil
\[
\widehat{M}_0(\lambda):=\left[\begin{array}{c:c}
M_0(\lambda) & \begin{matrix} a_1^T\otimes (C_1-\lambda D_1) & \cdots & a_{r_1}^T\otimes (C_{r_1}-\lambda D_{r_1}) \\ 0 & \cdots & 0 \end{matrix}\\ \hdashline
\begin{matrix} \phantom{\Big{(}} \widehat{b}_1\otimes (\widehat{C}_1-\lambda \widehat{D}_1)\phantom{\Big{(}} & 0  \\ \vdots & \vdots \\ \widehat{b}_{r_2}\otimes (\widehat{C}_{r_2}-\lambda \widehat{D}_{r_2}) & 0\end{matrix} & 0
\end{array}\right].
\]
Then, notice that the rational function \eqref{eq:R-TS-CORK2} can be written using the rational vectors $\Phi(\lambda)$ and $\Psi(\lambda)$ introduced in \eqref{eq:Phi and Psi} as
 \[
R(\lambda)=(\Psi(\lambda)^T\otimes I_n)
\widehat{M}_0(\lambda)
(\Phi(\lambda)\otimes I_n).
\]
Thus, the above equation, together with the linear relations in \eqref{prop:linear relation big TS}, suggests  to consider the matrix pencil (recall the generalized CORK framework)
{\small \begin{equation}\label{eq:lin-TS-CORK-rat}
\left[\begin{array}{c|c}
\widehat{M}_0(\lambda) & (\widehat{M}_\psi-\lambda \widehat{N}_\psi)^T\otimes I_n  \\ \hline
\phantom{\Big{(}}(\widehat{M}_\phi-\lambda \widehat{N}_\phi)\otimes I_n\phantom{\Big{(}} & 0
\end{array}\right],
\end{equation}}%
as a generalized CORK linearization of $R(\lambda)$.
Indeed, it can be shown that the generalized CORK algorithm can be applied to \eqref{eq:lin-TS-CORK-rat}, as long as the  shifts in the shift-and-invert steps  are not equal to the poles of the rational interpolants \eqref{eq:rat_approx}.
Furthermore, any $\lambda\in\mathbb{C}$ that is not a pole of any of the rational interpolants \eqref{eq:rat_approx} is an eigenvalue of $R(\lambda)$ if and only if it is an eigenvalue of \eqref{eq:lin-TS-CORK-rat}, and their associated right and left eigenvectors are easily related. 

Conclusively, by applying the generalized CORK method to \eqref{eq:lin-TS-CORK-rat} and its transpose, the two-sided CORK method  can be used  for computing  eigenvalues of $R(\lambda)$ that are not poles of the rational interpolants \eqref{eq:rat_approx} and their associated left and right eigenvectors.

\fi

\section{Numerical examples}\label{sec:examples}
This section illustrates the theory with a number of applications.
The AAA algorithm is used to approximate different nonlinear matrix functions and the accuracy of the resulting rational approximations is compared to the accuracy obtained with potential theory (Leja--Bagby points).
The approximations are compared in terms of accuracy and the number of poles (which is equal to the degree plus one) to achieve that accuracy.
We used the rational Krylov method (more specifically, its CORK implementation in \cite{bemm15}) to obtain eigenvalue and eigenvector estimates of the rational approximation.

We compare the following three methods.
\begin{description}
\item[NLEIGS] This is the static variant from \cite{guttel_vanbeeumen}. The rational polynomial is expressed in a basis of rational Newton polynomials. The poles are selected in the branch cut of the nonlinear function and the nodes
are Leja-Bagby points.
\item[AAA-EIGS] This is the rational Krylov method applied to linearization \eqref{eq:lin-CORK}. The rational functions are determined by applying AAA to the $m$ nonlinear functions from \eqref{eq:G} separately.
\item[SV-AAA-EIGS] This is the rational Krylov method applied to linearization \eqref{eq:lin-CORK}. The rational functions are determined using the set-valued AAA approach explain in \S\ref{sec:set-valued AAA}.
\end{description}

In the next section, we review the rational Krylov method that is used in the numerical experiments for finding eigenvalues of the linearizations.
In the numerical experiments, we do not use implicit restarting, i.e., the number of iterations corresponds to the dimension of the Krylov space.

\subsection{The rational Krylov method}

The rational Krylov method, sketched in Algorithm \ref{alg:1}, is a generalization of the shift-and-invert Arnoldi method for solving large-scale generalized eigenvalue problems.
\begin{algorithm}
\caption{Rational Krylov method} 
\label{alg:1}
\begin{algorithmic}[1]
\State Choose vector $v_1$, where $\|v\|_2=1$.
\For{$j=1,2,\hdots$}
\State Choose shift $\sigma_j$.
\State Choose continuation vector $t_j$.
\State Compute $\widehat{v}=(A-\sigma_jB)^{-1}Bw_j$, where $w_j=V_jt_j$.
\State Orthogonalize $\widetilde{v}=v-V_jh_j$, where $h_j=V_j^*\widehat{v}$.
\State  Get new vector $v_{j+1}=\widetilde{v}/h_{j+1,j}$, where $h_{j+1,j}=\|\widetilde{v}\|_2$.
\State Set $V_{j+1}=\begin{bmatrix} V_j & v_{j+1} \end{bmatrix}$.
\EndFor
\end{algorithmic}
\end{algorithm}

At step $j$, Algorithm \ref{alg:1} computes a matrix $V_{j+1}$ whose columns form an orthonormal basis for the rational Krylov subspace
\[
\mathcal{K}_j(A,B,v_1)= \mathrm{span}\{ u_1,u_2,\hdots,u_{j+1} \},
\]
where $u_{i+1}=(A-\sigma_i B)^{-1}B w_i$, for $i=1,\hdots,j$.
Furthermore, the matrix $V_{j+1}$ satisfies the rational
Krylov recurrence relation
\[
AV_{j+1}\underline{H}_j=BV_{j+1}\underline{K}_j,
\]
where $\underline{H}_j\in\mathbb{C}^{(j+1)\times j}$ is an upper Hessenberg matrix whose nonzero entries are the Gram-Schmidt coefficients computed by Algorithm \ref{alg:1}, and
\[
\underline{K}_j=\underline{H}_j \diag(\sigma_1,\hdots,\sigma_j)+\underline{T}_j\in\mathbb{C}^{(j+1)\times j},
\]
where $\underline{T}_j$ is an upper triangular matrix whose $i$th column is the continuation vector $t_i$ appended with some extra zeros.
Note that we use $t_j = e_j$, where $e_j$ is the $j$-th vector of $I$, i.e. we always choose $w_j$ as the iteration vector of the previous step.

Approximations for the eigenvalues and right eigenvectors  of the pencil $A-\lambda B$ are obtained by solving the small generalized eigenvalue problem
\[
K_j s_i = \lambda_i H_js_i,
\]
where $H_j$ and $K_j$ are, respectively, the $j\times j$ upper part of $\underline{H}_j$ and $\underline{K}_j$. 
The pair $(\lambda_i,x_i=V_{j+1}\underline{H}_js_i)$ is referred to as a Ritz pair of the pencil $A-\lambda B$.

\subsection{Gun problem}

The radio-frequency gun cavity problem from the NLEVP collection \cite{betcke_higham} is described by
the following matrix-valued function in $\lambda$
\begin{equation*}
	 A(\lambda) = K - \lambda M + i\sqrt{(\lambda-\sigma_1^2)}W_1 + i\sqrt{(\lambda-\sigma_2^2)}W_2,
\end{equation*}
where $K,M,W_1,W_2 \in \mathbb{R}^{9956 \times 9956}$ are symmetric, positive semi-definite matrices, $\sigma_1=0$, and $\sigma_2=108.8774$.

We accurately approximate the nonlinear part in $A(\lambda)$ in a semi-circle $\Sigma$ in the complex plane, see Figure~\ref{fig:gun_values}.
The function is approximated by a rational function in two different ways.
First, for NLEIGS, a rational polynomial with $31$ poles is used, with poles picked on the branch cut of $\sqrt{(\lambda-\sigma_2^2)}$ on the open interval $(-\infty,\sigma_2]$.
Second, for AAA-EIGS, the AAA test set consists of $500$ random points in the semi-disk combined with $500$ equally distributed points on the boundary.
The resulting poles for both functions $f_j=\sqrt{(\lambda-\sigma_j^2)}$, $j=1,2$, are plotted in Figure~\ref{fig:gun_values}.
Note that not all of the poles are shown, only the ones with a positive real part.

Figure~\ref{fig:gun_approx} shows the approximation error as a function of the number of poles of the rational polynomial for NLEIGS, AAA-EIGS, and SV-AAA-EIGS.
The approximation error is expressed as the relative summed error on the nonlinear functions:
\begin{equation}\label{eq:E_Gun}
	E_f = \sqrt{\sum_{i} \left(\sum_j \frac{f_j(s_i)}{\sum_k f_j^2(s_k)}\right)^2},
\end{equation}
for $j=1,2$, and as the error on the matrix functions:
\begin{equation*}
	E_m = \sqrt{\sum_{i=1}^{m}{\frac{\| A(s_i) - R(s_i) \|^2_1}{\| A(s_i) \|^2_1}}},
\end{equation*}
where the points $s_i, \; i=1,\ldots,1000$ are points of a test set consisting of random points on the bounds and inside the semi-circle.
AAA-EIGS leads to a small reduction in degree compared to NLEIGS.
An important reduction, however, is achieved using the set-valued variant, SV-AAA-EIGS: 
$17$ poles are sufficient for an accuracy of $10^{-13}$ for the AAA set, 
whereas NLEIGS requires $31$ poles.
Both errors $E_f$ and $E_m$ are very comparable.

We then determine eigenvalue and eigenvector estimates of the rational eigenvalue problems, around the central point $\text{Re}(s) = 250^2$, using the rational Krylov method.
We used the same shifts as in~\cite{guttel_vanbeeumen}, i.e. three shifts equally spaced on the real axis and two inside the semi-circle.
Figure~\ref{fig:gun_res} shows residual norms of the five fastest converging Ritz values, as a function of the iteration count, for NLEIGS and SV-AAA-EIGS.
The residual norms are defined as
\begin{equation}
	\label{eq:residue}
	\rho_i = \frac{\| A(\lambda_i)x_i \|_2 }{ \| A(\lambda_i) \|_1 \|x_i\|_2 },
\end{equation}
where $\lambda_i$ is the $i$th eigenvalue estimate, or Ritz value, and $x_i$ is an associated Ritz vector.
In the figure, comparable convergence behavior is observed for both approaches, with slightly less accurate results for NLEIGS.

\begin{center}
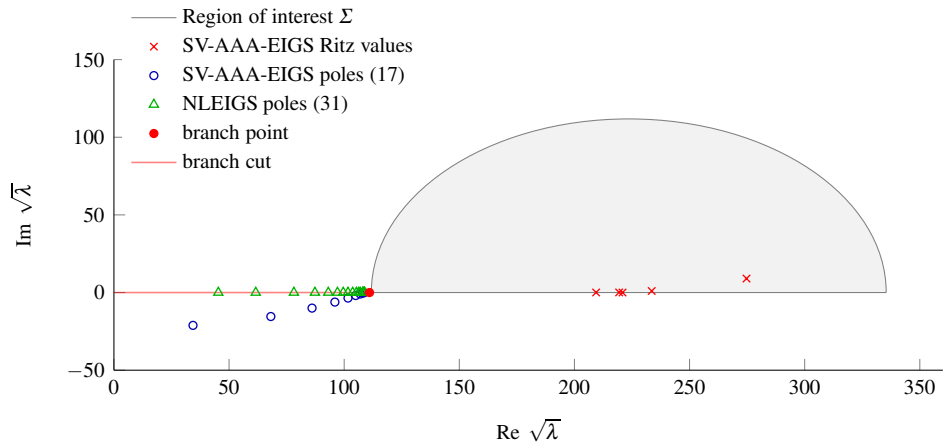
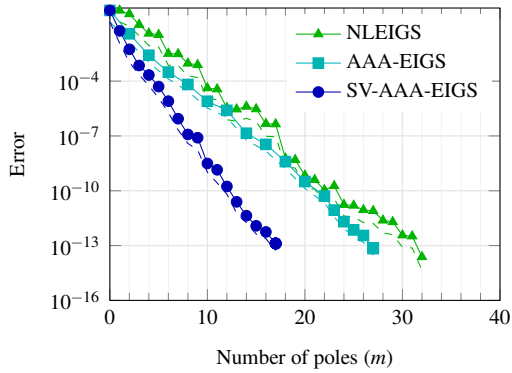
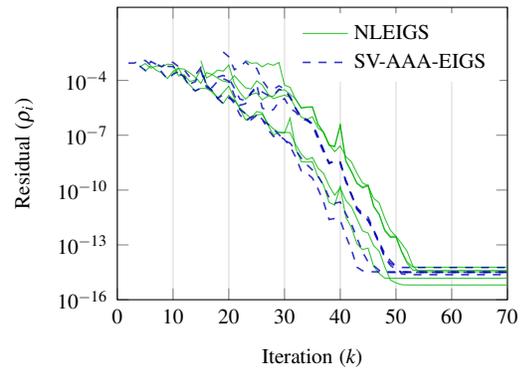
\begin{figure}
		\begin{subfigure}{0.95\textwidth}
			\centering
		\setlength\figurewidth{0.8\textwidth}
		{
%
%
	\definecolor{green_}{RGB}{0,180,0}%
\definecolor{blue_}{RGB}{0,0,180}%
\begin{tikzpicture}
\begin{axis}[%
width=\figurewidth,
height=0.375\figurewidth,
at={(0\figurewidth,0\figurewidth)},
scale only axis,
unbounded coords=jump,
xmin=0,
xmax=360,
ymin=-50,
ymax=150,
axis x line*=bottom,
axis y line*=left,
xlabel={Re $\: \sqrt{\lambda}$},
ylabel={Im $\: \sqrt{\lambda}$},
label style={font=\footnotesize},
tick label style={font=\footnotesize} ,
legend style={at={(0.01,1.2)},anchor=north west,legend cell align=left,align=left,draw=white!15!black,font=\footnotesize,draw=none,fill opacity=1,text opacity=1}
]
\addplot [color=gray,thin,domain=111.8:335.4,samples=1000]  { 0 } ;
\addlegendentry{Region of interest $\Sigma$};

\addplot [color=red, line width=0.5pt,mark size=2pt,only marks,mark=x,mark options={solid}]
  table[col sep=comma]{tables/gun_values_ritz.txt};
\addlegendentry{SV-AAA-EIGS Ritz values};

\addplot [color=blue_, line width=0.5pt,mark size=1.5pt,only marks,mark=o,mark options={solid}]
  table[col sep=comma]{tables/gun_values_aaa.txt};
\addlegendentry{SV-AAA-EIGS poles (17)};

\addplot [color=green_, line width=0.5pt,mark size=2pt,only marks,mark=triangle,mark options={solid}]
  table[col sep=comma]{tables/gun_values_lb.txt};
\addlegendentry{NLEIGS poles (31)};

\addplot [color=red,opacity=1, line width=0.5pt,mark size=1.5pt,only marks,mark=*,mark options={solid}] coordinates{
	(111,0)
};
\addlegendentry{branch point};
\addplot [color=red,opacity=0.5, line width=0.6pt] coordinates{
	(0,0)
	(111,0)
};
\addlegendentry{branch cut};

\filldraw[fill opacity=0.1,fill=gray,color=gray] (axis cs: 335.4,0) arc[radius =111.8, start angle= 0, end angle= 180];

\end{axis}
\end{tikzpicture}}
		\caption{Poles and Ritz values.}
		\label{fig:gun_values}
	\end{subfigure}

	\hspace{-1cm}
	\vspace{0.4cm}

		\begin{subfigure}{0.49\textwidth}
			\centering
		\setlength\figurewidth{0.73\textwidth}
			{
%
%
\begin{tikzpicture}

\begin{axis}[%
width=\figurewidth,
height=0.75\figurewidth,
at={(0\figurewidth,0\figurewidth)},
scale only axis,
minor grid style={line width=.1pt, draw=gray!10},
major grid style={line width=.5pt,draw=gray!20},
grid=both,
minor tick num = 4,
restrict x to domain=0:34,
xmin=0,
xmax=40,
xlabel={Number of poles ($m$)},
ymode=log,
ymin=1e-16,
ymax=1e0,
yminorticks=true,
ytick={1e-4,1e-7,1e-10,1e-13,1e-16},
ylabel={Error},
label style={font=\footnotesize},
tick label style={font=\footnotesize} ,
legend style={legend cell align=left,align=left,draw=white!15!black,font=\footnotesize,draw=none,fill opacity=1,text opacity=1}
]
\addplot [color=green_,mark size=2pt,solid,mark=triangle*,mark options={solid}]
	table[x = x1, y = y1,col sep=comma]{tables/gun_error.txt};
\addlegendentry{NLEIGS};

\addplot [color=green_,dashed,forget plot]
	table[x = x1, y = y4,col sep=comma]{tables/gun_error.txt};

\addplot [color=blue_light,solid,mark size=2pt,mark=square*,mark options={solid}]
table[x = x2, y = y2,col sep=comma]{tables/gun_error.txt};
\addlegendentry{AAA-EIGS};

\addplot [color=blue_light,dashed,forget plot]
	table[x = x2, y = y5,col sep=comma]{tables/gun_error.txt};

\addplot [color=blue_,solid,mark size=2pt,mark=*,mark options={solid}]
table[x = x3, y = y3,col sep=comma]{tables/gun_error.txt};
\addlegendentry{SV-AAA-EIGS};

\addplot [color=blue_,dashed,forget plot]
	table[x = x3, y = y6,col sep=comma]{tables/gun_error.txt};

\end{axis}
\end{tikzpicture}}
			\caption{Error $E_f$ (marked lines) and $E_m$ (dashed lines).}
			\label{fig:gun_approx}
		\end{subfigure}
		\begin{subfigure}{0.49\textwidth}
			\centering
			\setlength\figurewidth{0.73\textwidth}
			{
%
%
\begin{tikzpicture} 

\begin{axis}[%
width=\figurewidth,
height=0.75\figurewidth,
at={(0\figurewidth,0\figurewidth)},
scale only axis,
xmin=0,
xmax=70,
ymode=log,
minor grid style={line width=.3pt, draw=gray!15},
major grid style={line width=.6pt,draw=gray!20},
xmajorgrids=true,
ymin=1e-16,
ymax=1e0,
yminorticks=true,
xtick distance={10},
ytick={1e-4,1e-7,1e-10,1e-13,1e-16},
xlabel={Iteration ($k$)},
ylabel={Residual ($\rho_i$)},
label style={font=\footnotesize},
tick label style={font=\footnotesize} ,
legend style={legend cell align=left,align=left,draw=white!15!black,font=\footnotesize,draw=none}	
]
\foreach \y in {1,2,...,4} {
\addplot[line width=0.4pt,color=green_,solid,forget plot,draw opacity=0.8] table[col sep=comma,x index=0, y index=\y]
    {tables/gun_residue.txt};
}

\addplot[line width=0.4pt,color=green_,solid,draw opacity=0.8] table[col sep=comma,x index=0, y index=5]
    {tables/gun_residue.txt};
\foreach \y in {6,7,...,9} {
\addplot[dashed,line width=0.6pt,color=blue_,draw opacity=0.8,forget plot] table[col sep=comma,x index=0, y index=\y]
    {tables/gun_residue.txt};
}
\addplot[dashed,line width=0.6pt,color=blue_,draw opacity=0.8] table[col sep=comma,x index=0, y index=10]
    {tables/gun_residue.txt};
\legend{NLEIGS,SV-AAA-EIGS}
\end{axis}
\end{tikzpicture}}
			\caption{Residual norms~\eqref{eq:residue}.}
			\label{fig:gun_res}
		\end{subfigure}
		\caption{
			Results for the gun problem. 
		}
\end{figure}
\end{center}

\subsection{Bound states in semiconductor devices}

Determining bound states of a semiconductor device requires the solution of the Schr\"odinger equation,
which, after discretization, leads to a nonlinear eigenvalue problem with the matrix-valued function.
\begin{equation*}
	A(\lambda) = H - \lambda I +  \sum_{j=0}^{80} e^{i\sqrt{\lambda-\alpha_j}}S_j,
\end{equation*}
where $H,S_j \in \mathbb{R}^{16281 \times 16281}$, see \cite{vandenberghe_fischetti,van_beeumen}.
Matrix $H$ is symmetric and matrices $S_j$ have low rank.
This function has $81$ branch points on the real axis at $\lambda=\alpha_j, \; j=0,\ldots,80$, between $-0.19$ and $22.3$, as can be seen in Figure~\ref{fig:states}.
There is a branch cut running from $[-\infty,\alpha_0]$ and one between each branch point.

For approximating the nonlinear functions with Leja--Bagby points, in \cite{vandenberghe_fischetti}, a transformation  is used that removes the branch cut between two predetermined, subsequent branch points, i.e., for $\lambda \in [\alpha_{i-1},\alpha_i]$.
The interpolant based on these Leja--Bagby points is only valid for $\lambda$-values within this interval.
For interval $[\alpha_0,\alpha_1]$, a rational approximation with $50$ poles was used~\cite{vandenberghe_fischetti}.
This corresponds to the green triangular marker in Figure~\ref{fig:states}.

In contrast, using AAA and set-valued AAA, the $81$ nonlinear functions are approximated on the real axis, over multiple branch points, without first transforming the problem.
Figure~\ref{fig:states} shows the resulting number of poles for approximating the nonlinear functions with an accuracy of $10^{-13}$ on a test set of $2000$ equally spaced points, between $\alpha_0$ and the following seven branch points, i.e. for $\lambda \in [\alpha_0,\alpha_i], \; i=1,\ldots,7$.
For example, the third bullet marker indicates the number of poles when the AAA test set runs from $\alpha_0$ to $\alpha_3$, so that it includes branchpoints $\alpha_0,\alpha_1, \alpha_2$ and $\alpha_3$ and the branch cuts in between.
We show the results for AAA and set-valued AAA, the second resulting in a significant reduction of the number of poles for the 81 functions in the example.

The two eigenvalues between $\alpha_0$ and $\alpha_7$ are situated in the first interval, $[\alpha_0,\alpha_1]$, as can be seen in Figure~\ref{fig:states}.
We used the rational Krylov method with five equally spaced shifts from $\alpha_0+\epsilon$ to $\alpha_1-\epsilon$, where $\epsilon = 10^{-2}$.
The convergence behavior, using a SV-AAA-EIGS with 141 poles for interval $[\alpha_0,\alpha_7]$, is shown in Figure~\ref{fig:states_conv}.
We used $\|A(\lambda)x\|_2$ as error measure, to be able to compare the results to those found in \cite{van_beeumen}.
The behavior is comparable to that observed for the static variant of NLEIGS.

\begin{figure}[ht]
\begin{center}
\centering
\setlength\figurewidth{0.8\textwidth}
%
%
	\definecolor{green_}{RGB}{0,180,0}%
\definecolor{blue_}{RGB}{0,0,180}%
\begin{tikzpicture}
\begin{axis}[%
width=\figurewidth,
height=0.375\figurewidth,
at={(0\figurewidth,0\figurewidth)},
scale only axis,
unbounded coords=jump,
xmin=-0.3,
xmax=1.5,
ymin=0,
ymax=0,
axis x line = center,
axis y line = none,
restrict x to domain=-0.5:23,
xlabel={Re $\lambda$},
label style={font=\footnotesize},
tick label style={font=\footnotesize} ,
legend style={at={(0.03,0.97)},anchor=north west,legend cell align=left,align=left,draw=white!15!black,font=\footnotesize,draw=none,fill opacity=0,text opacity=1},
x tick style={draw=none},
x tick label style={yshift=0em},
xtick={-0.19,-0.13,0.43,0.88,1.22},
xticklabels={$\alpha_0$,$\alpha_1$,$\alpha_7$,$\alpha_{10}$,$\alpha_{80}$},
extra x ticks={-0.19,0.88,1.22},
extra x tick labels={-0.19,0.88,22.3},
extra x tick style={x tick label style={yshift=1.8em}},
]

\addplot[color=red,opacity=1, line width=0.8pt,mark size=2pt,mark=*,mark options={solid}] table[y = y, x = x,col sep=comma]{tables/states_brpts.txt};

\addplot[color=red,opacity=1,line width=0.8pt] coordinates{
	(-0.5,0)
	(-0.1,0)
};

\addplot[color=red,opacity=1,line width=0.8pt] coordinates{
	(0.88,0)
	(1,0)
};
\label{branchcut}

\addplot[color=white,opacity=1,line width=1pt] coordinates{
	(1,0)
	(1.1,0)
};

\addplot[dotted,color=red,opacity=1,line width=0.8pt] coordinates{
	(1,0)
	(1.1,0)
};

\addplot[color=red,opacity=1,line width=0.8pt] coordinates{
	(1.1,0)
	(1.22,0)
};

\addplot[color=red,opacity=1,line width=0.8pt,mark size=2pt,mark=*,mark options={solid},only marks] coordinates{
	(1.22,0)
};

\label{branchpoint}

\node (br0) at (axis cs:-0.197851945385732,0) {};
\node (br1) at (axis cs:-0.131996471634489,0) {};
\node (br2) at (axis cs:-0.0336677874266508,0) {};
\node (br3) at (axis cs:0.059929216629253,0) {};
\node (br4) at (axis cs:0.115877829357817,0) {};
\node (br5) at (axis cs:0.188433075981491,0) {};
\node (br6) at (axis cs:0.308422313017247,0) {};
\node (br7) at (axis cs:0.428336396950971,0) {};

\end{axis}

\begin{axis}[%
width=\figurewidth,
height=0.375\figurewidth,
at={(0\figurewidth,0\figurewidth)},
scale only axis,
unbounded coords=jump,
xmin=-0.3,
xmax=1.5,
ymin=0,
yshift=-1.5cm,
ymax=0,
axis x line = center,
axis y line = none,
restrict x to domain=-0.5:0.6,
xlabel={Re $\lambda$},
label style={font=\footnotesize},
tick label style={font=\footnotesize} ,
legend style={at={(0.03,0.97)},anchor=north west,legend cell align=left,align=left,draw=white!15!black,font=\footnotesize,draw=none,fill opacity=0,text opacity=1},
x tick style={draw=none},
x tick label style={yshift=0em},
xtick={-0.19,0.5},
xticklabels={$\alpha_0$,$\alpha_1$},
]

\addplot[color=red,opacity=1, line width=0.8pt,mark size=2pt,mark=*,mark options={solid}] coordinates{
	(-0.19,0)
	(0.5,0)
};

\addplot[color=blue_,opacity=1, line width=0.8pt,mark size=3pt,mark=x,mark options={solid},only marks] coordinates{
	(0.4098,0)
	(0.4946,0)
};

\node (br8) at (axis cs:-0.19,0) {};
\node (br9) at (axis cs:0.5,0) {};

\end{axis}

\begin{axis}[%
width=\figurewidth,
height=0.375\figurewidth,
at={(0\figurewidth,0\figurewidth)},
scale only axis,
unbounded coords=jump,
xmin=-0.3,
xmax = 1.5,
yshift=3cm,
ymin=-0.4,
ymax=500,
axis x line = none,
restrict x to domain=-0.5:0.6,
axis y line* = left,
xlabel={$\lambda$},
label style={font=\footnotesize},
tick label style={font=\footnotesize},
ytick={0,100,200,300,400,500},
legend style={at={(0.5,0.8)},anchor=north west,legend cell align=left,align=left,draw=white!15!black,font=\footnotesize,draw=none,fill opacity=1,text opacity=1},
x tick label style={yshift=2em},
xticklabels={$\alpha_1$,$\alpha_2$},
minor tick num = 4,
ylabel={Number of poles ($m$)},
]

\foreach \x in {0,...,5}
	\addplot[color=gray!25, line width=0.8pt,forget plot] coordinates {
		(-0.3,\x*100)
		(0.56,\x*100)
	};

\foreach \y in {0,...,5}
	\foreach \x in {1,...,5}
		\addplot[color=gray!15, line width=0.3pt,forget plot] coordinates {
			(-0.3,\x*20+\y*100)
			(0.56,\x*20+\y*100)
		};

\addplot[color=blue_light,opacity=1, line width=0.5pt,mark size=2pt,mark=*,mark options={solid}] table[y = y, x = x,col sep=comma]{tables/states_.txt};
\addlegendentry{AAA-EIGS}
\label{AAA_poles}

\addplot[color=blue_,opacity=1, line width=0.5pt,mark size=2pt,mark=*,mark options={solid}] table[y = y, x = x,col sep=comma]{tables/states.txt};
\addlegendentry{SV-AAA-EIGS}
\label{AAA_poles_sv}

\addplot[color=green_,opacity=1,line width=0.5pt,mark size=3pt,mark=triangle*,mark options={solid},only marks] coordinates{
	(-0.132,50)
};
\addlegendentry{NLEIGS}

\addplot[color=blue_,opacity=1, line width=0.8pt,mark size=3pt,mark=x,mark options={solid},only marks] coordinates{
	(-0.5,0)
};

\addplot[color=red,opacity=1, line width=0.8pt,mark size=2pt,mark=*,mark options={solid},only marks] coordinates{
	(-0.4,0)
};

\addplot[color=red,opacity=1,line width=0.8pt] coordinates{
	(-0.5,-0.4)
	(-0.4,-0.4)
};

\addlegendentry{Eigenvalues}
\addlegendentry{Branch point}
\addlegendentry{Branch cut}

\label{Ritz_values}

\node (n1) at (axis cs:-0.131996471634489,206) {};
\node (n2) at (axis cs:-0.0336677874266508,248) {};
\node (n3) at (axis cs:0.059929216629253,304) {};
\node (n4) at (axis cs:0.115877829357817,336) {};
\node (n5) at (axis cs:0.188433075981491,378) {};
\node (n6) at (axis cs:0.308422313017247,403) {};
\node (n7) at (axis cs:0.428336396950971,431) {};

\end{axis}

\draw[color=black,opacity=0.2, line width=1pt]  (br1.center) -- (n1.center);
\draw[color=black,opacity=0.2, line width=1pt]  (br2.center) -- (n2.center);
\draw[color=black,opacity=0.2, line width=1pt]  (br3.center) -- (n3.center);
\draw[color=black,opacity=0.2, line width=1pt]  (br4.center) -- (n4.center);
\draw[color=black,opacity=0.2, line width=1pt]  (br5.center) -- (n5.center);
\draw[color=black,opacity=0.2, line width=1pt]  (br6.center) -- (n6.center);
\draw[color=black,opacity=0.2, line width=1pt]  (br7.center) -- (n7.center);

\draw[color=red,opacity=1., line width=0.2pt,dashed]  (br0.center) -- (br8.center);
\draw[color=red,opacity=1., line width=0.2pt,dashed]  (br1.center) -- (br9.center);

\end{tikzpicture}%
\end{center}
\vspace{-1.5cm}
\caption{
Number of poles for NLEIGS, AAA-EIGS and SV-AAA-EIGS, for seven intervals, for the semiconductor problem.
}
\label{fig:states}
\end{figure}
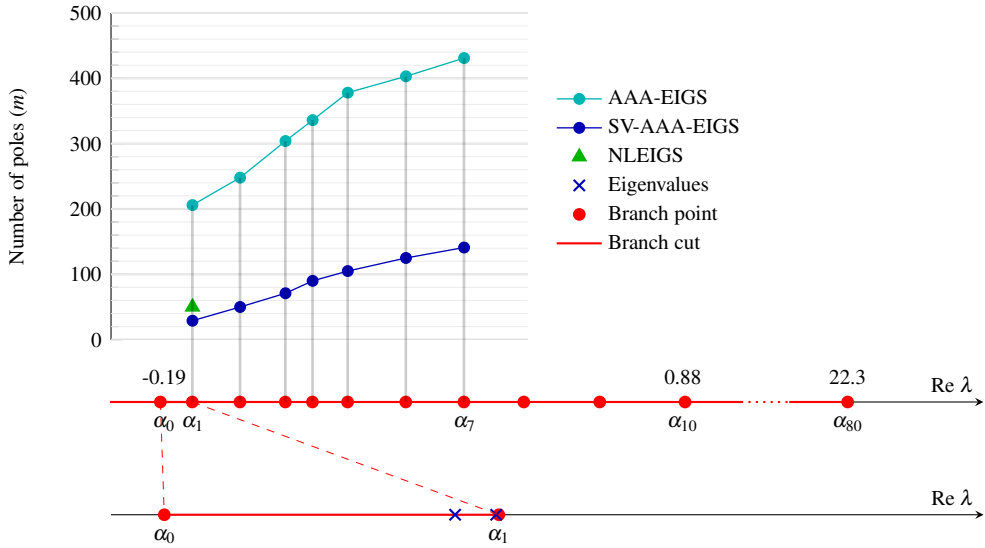

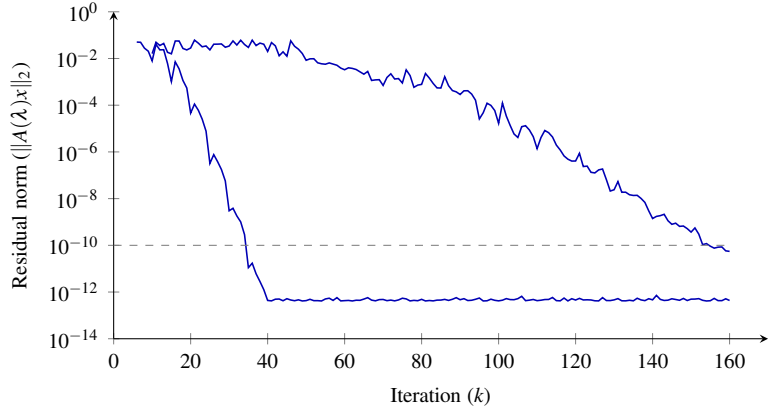
\begin{figure}[ht]
\begin{center}
\centering
\setlength\figurewidth{0.6\textwidth}
%
%
\begin{tikzpicture}
\begin{axis}[%
width=\figurewidth,
height=0.5\figurewidth,
at={(0\figurewidth,0\figurewidth)},
scale only axis,
unbounded coords=jump,
ymax=1,
ymin=1e-14,
xmin=0,
xmax=170,
ymode=log,
restrict x to domain=0:160,
xlabel={Iteration ($k$)},
axis x line = bottom,
axis y line = left,
ylabel={Residual norm ($\|A(\lambda)x\|_2$)},
label style={font=\footnotesize},
tick label style={font=\footnotesize},
ytick={1,1e-2,1e-4,1e-6,1e-8,1e-10,1e-12,1e-14},
legend style={at={(0.03,0.97)},anchor=north west,legend cell align=left,align=left,draw=white!15!black,font=\footnotesize,draw=none,fill opacity=0,text opacity=1},
]
\addplot[line width=0.6pt,color=blue_,draw opacity=1] table[y = y1, x = x,col sep=comma]{tables/states_residue.txt};
\addplot[line width=0.6pt,color=blue_,draw opacity=1] table[y = y2, x = x,col sep=comma]{tables/states_residue.txt};
\addplot [color=gray,thin,dashed,domain=0:170,samples=100,forget plot]  { 1e-10 } ;
\end{axis}
\end{tikzpicture}%
\end{center}
\caption{
	Convergence behavior of the two Ritz values for set-valued AAA between $\alpha_0$ and $\alpha_7$, with 141 poles, for the semiconductor problem.
}
\label{fig:states_conv}
\end{figure}

\subsection{Sandwich beam}

A beam, consisting of two steel layers surrounding a damping layer, is modeled using the following matrix function
\begin{equation*}
A(\lambda) = K - \lambda^2 M + \frac{G_0 + G_{\infty} (i \lambda \tau)^{\alpha}}{1 + (i \lambda \tau)^{\alpha}}C,
\end{equation*}
with $K,M,C \in \mathbb{R}^{168 \times 168}$ symmetric positive semi-definite matrices \cite{vanbeeumen_meerbergen}.
Here, $G_0 = 350.4$kPa is the static shear modulus, $G_\infty = 3.062$MPa is the asymptotic shear modulus, $\tau = 8.23$ns is the relaxation time and $\alpha=0.675$ a fractional parameter.
Variable $\lambda$ is the angular frequency and we are interested in eigenvalues in the range $\lambda \in [200,30000]$.

We use the AAA algorithm to approximate the function, where a sample set consists of $10^4$ equidistant points within the frequency range.
The algorithm converges with $11$ poles.
The location of the poles is shown in Figure~\ref{fig:beam_vals} and the approximation error, on a random test set of 1000 points $\lambda \in [200,30000]$, is shown in Figure~\ref{fig:beam_err}.
Note that $\text{Re} \; \lambda < -1$ and $\text{Im} \; \lambda > 0$ for all poles, so that we can visualize the poles on a logaritmic axes, with on the negative real axis $-\log_{10}|\text{Re} \; \lambda|$.
Note also that we do not use set-valued AAA, as there is only one non-polynomial function.

We then use the rational Krylov method to obtain eigenvalue and eigenvector estimates of $A(\lambda)$.
We use $10$ shifts, $[2,5,10,100,200,210,220,230,240,250]\cdot 100$, i.e., with more shifts located near the end of the interval.
This results in the Ritz values also shown in Figure~\ref{fig:beam_vals}, on the same logaritmic axes, since $\text{Re} \; \lambda > 1$ for all Ritz values.
The residual norms~\eqref{eq:residue} can be seen in Figure~\ref{fig:beam_res}.
Note that because of the large norm of $K$, most residual norm values lie around $10^{-8}$ for $k=1$.
After 50 iterations, the residual norms decrease by a factor $10^{-4}$ for most Ritz values.

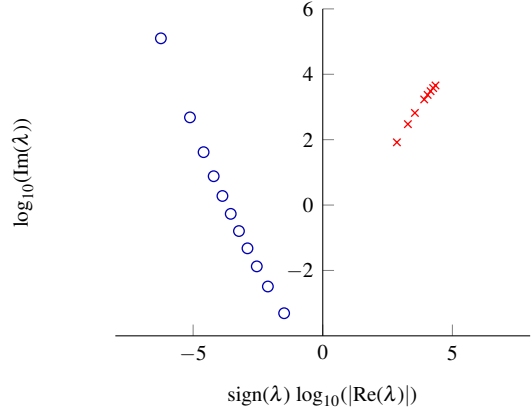
\begin{figure}[ht]
\begin{center}
\centering
\setlength\figurewidth{0.4\textwidth}
%
%
\definecolor{mycolor1}{rgb}{0.00000,0.44700,0.74100}%
\definecolor{mycolor2}{rgb}{0.85000,0.32500,0.09800}%
\definecolor{mycolor3}{rgb}{0.92900,0.69400,0.12500}%
\begin{tikzpicture}

\begin{axis}[%
width=0.95092\figurewidth,
height=0.75\figurewidth,
at={(0\figurewidth,0\figurewidth)},
scale only axis,
unbounded coords=jump,
xmin=-8,
xmax=8,
axis y line*=middle,
axis x line*=bottom,
xlabel={sign($\lambda$) $\log_{10}$($|$Re($\lambda$)$|$)},
ymin=-4,
ymax=6,
label style={font=\footnotesize},
tick label style={font=\footnotesize} ,
ylabel={$\log_{10}$(Im($\lambda$))},
legend style={at={(0.8,0.25)},anchor=north west,legend cell align=left,align=left,draw=white!15!black,font=\footnotesize,draw=none,fill opacity=0,text opacity=1}
]

\addplot [color=red, line width=0.5pt,mark size=2pt,only marks,mark=x,mark options={solid}]
  table[col sep=comma]{tables/beam_values_ritz.txt};
\addlegendentry{Ritz values};

\addplot [color=blue_, line width=0.5pt,mark size=2pt,only marks,mark=o,mark options={solid}]
  table[col sep=comma]{tables/beam_values_poles.txt};
\addlegendentry{AAA poles (11)};

\end{axis}
\end{tikzpicture}%
\end{center}
\caption{
	AAA poles and Ritz values for the sandwich beam.
}
\label{fig:beam_vals}
\end{figure}

\begin{figure}[ht]
\begin{center}
\centering
\hspace{-0.2cm}
\begin{subfigure}{0.47\textwidth}
	\centering
	\setlength\figurewidth{0.78\textwidth}
%
%
\definecolor{mycolor1}{rgb}{0.00000,0.44700,0.74100}%
\definecolor{mycolor2}{rgb}{0.85000,0.32500,0.09800}%
\begin{tikzpicture}

\begin{axis}[%
width=0.95092\figurewidth,
height=0.75\figurewidth,
at={(0\figurewidth,0\figurewidth)},
restrict x to domain=3:13,
scale only axis,
xmin=0,
xmax=14,
xlabel={Number of poles ($m$)},
minor grid style={line width=.1pt, draw=gray!10},
major grid style={line width=.5pt,draw=gray!20},
minor tick num = 4,
xtick={0,5,10,15},
grid=both,
ymode=log,
ymin=1e-15,
ymax=1e-3,
ytick={1e-1,1e-3,1e-6,1e-9,1e-12,1e-15},
yminorticks=true,
ylabel={Error ($E_f$)},
label style={font=\footnotesize},
tick label style={font=\footnotesize} ,
]
\addplot [color=blue_,solid,mark=*,mark options={solid}] table[col sep=comma,x=x,y=y]
	{tables/beam_error.txt};
\end{axis}
\end{tikzpicture}
	\caption{Approximation error~\eqref{eq:E_Gun} for AAA. \vspace{0.2cm}}
	\label{fig:beam_err}
\end{subfigure}
\hspace{0.2cm}
\begin{subfigure}{0.47\textwidth}
	\centering
	\setlength\figurewidth{0.78\textwidth}
	{
%
%
\begin{tikzpicture}

\begin{axis}[%
width=0.95092\figurewidth,
height=0.75\figurewidth,
at={(0\figurewidth,0\figurewidth)},
scale only axis,
xmin=0,
xmax=70,
xlabel={Iteration ($k$)},
ylabel={Residual ($\rho_i$)},
ymode=log,
ymin=1e-15,
unbounded coords=jump,
ymax=1e-3,
ytick={1e-3,1e-6,1e-9,1e-12,1e-15},
restrict x to domain=0:80,
minor grid style={line width=.3pt, draw=gray!15},
major grid style={line width=.6pt,draw=gray!20},
xmajorgrids=true,
yminorticks=true,
label style={font=\footnotesize},
tick label style={font=\footnotesize} ,
]

\foreach \y in {1,2,...,9} {
\addplot[color=blue_, line width=0.5pt] table[col sep=comma,x index=0, y index=\y]
    {tables/beam_res_in.txt};
}
\label{beam_in}

\foreach \y in {1,2,...,59} {
\addplot[color=blue_, dotted, line width=0.5pt] table[col sep=comma,x index=0, y index=\y]
    {tables/beam_res_out.txt};
}
\label{beam_out}

\end{axis}
\end{tikzpicture}
	\caption{
		Convergence of Ritz values inside (\ref{beam_in}) and outside (\ref{beam_out}) of the interval of interest.
	}
	\label{fig:beam_res}
\end{subfigure}
\end{center}
\caption{
	Approximation error and residuals for the sandwich beam.
}
\end{figure}
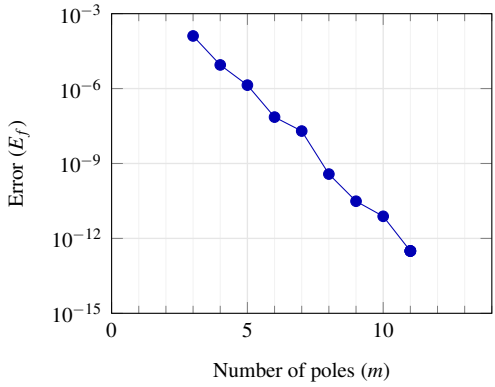
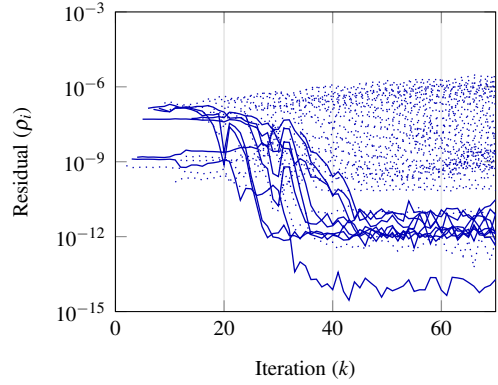


\subsection{Car cavity problem}

The following model was generated by Dr.~Axel van de Walle from KU Leuven, using a mesh from a Siemens tutorial, and using poro-elastic material properties
from \cite{allard09} and \cite{chazot13}.
The following matrix-valued function describes the nonlinear behavior of the sound pressure inside a car cavity with porous seats:
\begin{equation*}
	A(\lambda) = K_0 + h_K(\lambda)K_1 - \lambda^2(M_0 + h_M(\lambda) M_1),
\end{equation*}
where $K_0,K_1,M_0,M_1 \in \mathbb{R}^{15036 \times 15036}$ are symmetric, positive semidefinite matrices and $\lambda$ is the angular frequency $\lambda=2\pi f$.
The nonlinear functions are given as:
\begin{equation*}
	h_K(\lambda)  =  \frac{\phi}{\alpha(\lambda)}, \quad
	\alpha(\lambda)  =  {\alpha_{\infty} + \frac{\sigma \phi}{i \lambda \rho_0} \sqrt{1+i \lambda \rho_0 \frac{4 \alpha_{\infty}^2 \eta}{\sigma^2 \Lambda^2 \phi^2}}},
\end{equation*}
and
\begin{equation*}
	h_M(\lambda)  =  \phi\left(\gamma-\frac{\gamma-1}{\alpha'(\lambda)}\right), \quad
	\alpha'(\lambda)  =  1 + \frac{8 \eta}{i \lambda \rho_0 \Lambda'^2P_r}\sqrt{1 + i \lambda \rho_0 \frac{\Lambda'^2 P_r}{16 \eta}},
\end{equation*}
with the parameters defined in Table~\ref{tab:car}.
\begin{table}
\caption{Constants of the car cavity model}\label{tab:car}
\begin{center}
\begin{tabular}{|cc|cc|cc|}\hline
	$\alpha_{\infty}$ & $1.7$ & $\sigma$ & $13500 kg m^{-3} s^{-1}$ & $\phi$ & $0.98$   \\\hline
$\eta$ & $1.839 \cdot 10^{-5}$ & $\Lambda$ & $80 \cdot 10^{-6} m$ & $\Lambda'$ & $160 \cdot 10^{-6}m$ \\\hline
$\gamma$ & $1.4$ & $\rho_0$ & $1.213$ & $P_r$ & $0.7217$  \\\hline
\end{tabular}
\end{center}
\end{table}
The nonlinear function $h_K$ is unbounded around $\lambda=514i$ and has a branch point around $\lambda=619i$, with a branch cut on the imaginary axis.
The second nonlinear function $h_M$ is unbounded around $\lambda=815i$ and has a branch point at $\lambda=2089i$.
These points and branch cuts are shown in Figure~\ref{fig:car_values_1}.

Since the physical model includes damping, we need to take into account that the eigenvalues have positive imaginary parts.
We chose a test set with $5 \cdot 10^4$ real values, Re$(\lambda) \in [1,300]$, and $5 \cdot 10^4$ random values in the rectangle with corner points $0$ and $300+10^4i$, i.e. with very large imaginary part.
We can see from Figure~\ref{fig:car_values_1} that some AAA poles are picked very close to the singularities of the nonlinear functions.
The degree of the set valued rational approximation is 42 for a tolerance of $10^{-13}$.

In order to compare with potential theory, we then chose a test set over a smaller region, namely the rectangle with corners $0$ and $300+510i$.
In this way, the set of points remains below the first singular point of the function around $514i$.
The Leja--Bagby poles are picked on the imaginary axis, starting at Im$(\lambda) = 514$.
This is shown in Figure~\ref{fig:car_values_2}.
The degree necessary to reach a relative accuracy of $10^{-12}$ on the border of the rectangle was $40$.
For AAA on this same, smaller rectangle, we found that the required approximation accuracy can be obtained with a polynomial of degree 11.
We compared the results obtained by the rational Krylov method for both approximations, where we used $10$ equally spaced shifts on the real axis Re$(\lambda)$, from $1$ to $300$.
Figure~\ref{fig:car_error} shows some of the Ritz values together with the number of Krylov iterations required to reach a residual norm \eqref{eq:residue} below $10^{-12}$.
These Ritz values have low imaginary parts, confirming our choice of test set, and, as for the gun problem, convergence is comparable for both methods.


\begin{center}
\begin{figure}
	\hspace{-1.5cm}
		\begin{subfigure}{0.65\textwidth}
			\centering
			\setlength\figurewidth{0.8\textwidth}
			{
%
%
\definecolor{green_}{RGB}{0,180,0}%
\definecolor{blue_}{RGB}{0,0,180}%

\begin{tikzpicture}
\begin{axis}[%
at={(0\figurewidth,0\figurewidth)},
width=\figurewidth,
height=0.5\figurewidth,
scale only axis,
axis lines = middle,
axis line style={-},
unbounded coords=jump,
xmin=-200,
xmax=350,
ymin=0,
ymax=2200,
domain=-200:300,
ytick={514,815},
xtick={-200,-100,0,300},
restrict y to domain=-50:2200,
xlabel={Re $\lambda $},
ylabel={Im $\lambda $},
label style={font=\footnotesize},
tick label style={font=\scriptsize},
extra y ticks={691,2089},
extra y tick labels={$691$,$2089$},
y tick style={draw=none},
extra y tick style={y tick label style={right, xshift=0.25em}},
yticklabel pos=right,
ylabel style={at={(ticklabel* cs:1)},anchor=south west},
legend style={at={(-0.2,0.35)},anchor=south west,legend cell align=left,align=left,draw=white!15!black,font=\footnotesize,draw=none,fill opacity=1,text opacity=1}
]

\addplot [color=red, mark size=3pt,only marks,mark=x,mark options={solid}] coordinates {
	(0,514)
	};
\addlegendentry{Singularity};

\addplot [color=red, mark size=3pt,only marks,mark=x,mark options={solid},forget plot] coordinates {
	(0,815.70)
	};

\addplot [color=red,opacity=1, mark size=2pt,only marks,mark=*,mark options={solid}] coordinates {
	(0,691.38)
	};
\addlegendentry{Branch point};

\addplot [color=red, mark size=2pt,only marks,mark=*,mark options={solid},forget plot] coordinates {
	(0,2089)
	};

\addplot [line width=0.8pt,color=red] coordinates {
	(0,691.38)
	(0,2200)
	};
\addlegendentry{Branch cut};

\addplot [color=green_,opacity=0.6, mark size=0.8pt,only marks,mark=*,mark options={solid}]
  table[x = x, y = y,col sep=comma]{tables/car_set_aaa_2.txt};
\addlegendentry{Some AAA test points};

\addplot [color=green_,opacity=0.9, mark size=1.5pt,only marks,mark=o,mark options={solid}]
  table[x = x, y = y,col sep=comma]{tables/car_pol_aaa_2.txt};
\addlegendentry{AAA poles};

\end{axis}
\end{tikzpicture}}
			\caption{Branch points and cuts and values for Re$(\lambda) \in [-200,300]$ and the poles nearest the two singularities.}
		\end{subfigure}
		\hspace{-0.3cm}
		\begin{subfigure}{0.4\textwidth}
			\centering
			\setlength\figurewidth{0.65\textwidth}
			{
%
%
\definecolor{green_}{RGB}{0,180,0}%
\definecolor{blue_}{RGB}{0,0,180}%

\begin{tikzpicture}
\begin{axis}[%
at={(0\figurewidth,0\figurewidth)},
width=\figurewidth,
height=0.5\figurewidth,
scale only axis,
axis x line = middle,
axis y line = right,
axis line style={-},
unbounded coords=jump,
xmin=-24000,
xmax=1,
ymin=-21000,
ymax=5000,
domain=-200:300,
ytick={-20000,-5000,-1000},
xtick={-23000},
restrict y to domain=-20000:2100,
xlabel={Re $\lambda $},
ylabel={Im $\lambda $},
label style={font=\footnotesize},
tick label style={font=\scriptsize},
y tick label style={right, xshift=0.25em},
yticklabel pos=right,
ylabel style={at={(0.72,-0.13)},anchor=east,rotate=-90},
xlabel style={at={(ticklabel* cs:-0.1)},anchor=south west},
extra x ticks={-10000,-5000},
extra x tick labels={-1,-0.5},
extra x tick style={x tick label style={yshift=1.5em}},
]

\addplot [color=green_,opacity=0.9, mark size=1.5pt,only marks,mark=o,mark options={solid}]
  table[x = x, y = y,col sep=comma]{tables/car_pol_aaa_2.txt};

\end{axis}

\begin{axis}[%
xshift=-.08\figurewidth*6/2.5,
yshift=-.3\figurewidth*1.2/2.5,
width=.5\figurewidth,
height=.5\figurewidth,
axis y line*=right,
axis x line*=top,
scaled y ticks = false,
scaled x ticks = false,
xmin=-205000,
xmax=-195000,
ymin=-1670000,
ymax=-1530000,
restrict y to domain=-1650000:-1550000,
ytick={-1593000},
xtick={-200800},
xticklabels={-2.0e5},
yticklabels={-1.6e6},
tick label style={font=\scriptsize},
legend style={at={(0.9,0.03)},anchor=south west,legend cell align=left,align=left,draw=white!15!black,font=\footnotesize,draw=none,fill opacity=1,text opacity=1}
]

\addplot [color=green_,opacity=0.9, mark size=1.5pt,only marks,mark=o,mark options={solid}]
  table[x = x, y = y,col sep=comma]{tables/car_pol_aaa_2.txt};

\end{axis}

\begin{axis}[%
xshift=5.5\figurewidth*0.5/2.5,
yshift=.6\figurewidth*3/2.5,
width=.5\figurewidth,
height=.5\figurewidth,
axis y line*=left,
axis x line*=bottom,
scaled y ticks = false,
scaled x ticks = false,
xmin=320000,
xmax=420000,
ymin=1300000,
ymax=2300000,
restrict y to domain=1300000:2300000,
xtick={376000},
ytick={1790000},
xticklabels={3.8e5},
yticklabels={1.8e6},
tick label style={font=\scriptsize},
legend style={at={(0.9,0.03)},anchor=south west,legend cell align=left,align=left,draw=white!15!black,font=\footnotesize,draw=none,fill opacity=1,text opacity=1}
]

\addplot [color=green_,opacity=0.9, mark size=1.5pt,only marks,mark=o,mark options={solid}]
  table[x = x, y = y,col sep=comma]{tables/car_pol_aaa_2.txt};

\end{axis}

\end{tikzpicture}}

			\vspace{1cm}

			\caption{All of the AAA poles.}
		\end{subfigure}
	\caption{Some test points and poles of AAA for the car cavity problem for large imaginary values.}
	\label{fig:car_values_1}
\end{figure}
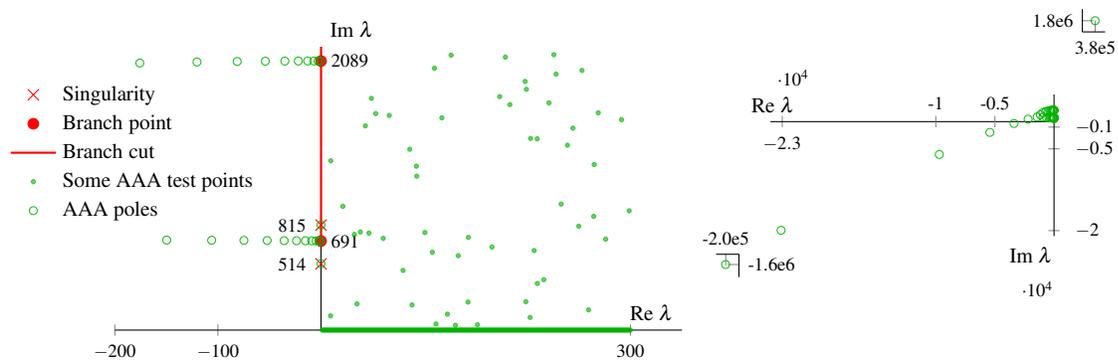
\end{center}

\hspace{0.2cm}

\begin{center}
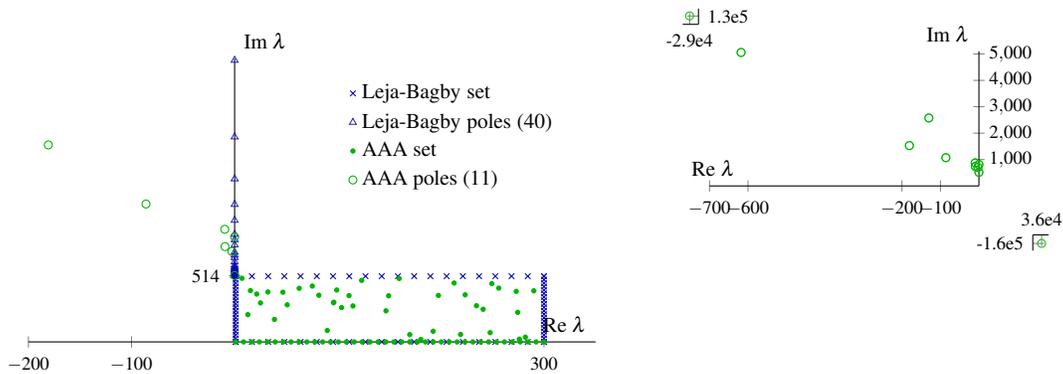
\begin{figure}
	\hspace{-1.4cm}
		\begin{subfigure}{0.65\textwidth}
		 \centering
		 \setlength\figurewidth{0.8\textwidth}
		 {
%
%
\definecolor{green_}{RGB}{0,180,0}%
\definecolor{blue_}{RGB}{0,0,180}%

\begin{tikzpicture}
\begin{axis}[%
at={(0\figurewidth,0\figurewidth)},
width=\figurewidth,
height=0.5\figurewidth,
scale only axis,
axis lines = middle,
axis line style={-},
unbounded coords=jump,
xmin=-200,
xmax=350,
ymin=0,
ymax=2200,
domain=-200:300,
ytick={514},
xtick={-200,-100,0,300},
restrict y to domain=-50:2200,
xlabel={Re $\lambda $},
ylabel={Im $\lambda $},
label style={font=\footnotesize},
tick label style={font=\scriptsize},
ylabel style={at={(ticklabel* cs:1)},anchor=south west},
legend style={at={(0.55,0.5)},anchor=south west,legend cell align=left,align=left,draw=white!15!black,font=\footnotesize,draw=none,fill opacity=1,text opacity=1}
]

\addplot [color=blue_, mark size=1.5pt,only marks,mark=x]
  table[x = x, y = y,col sep=comma]{tables/car_set_lb.txt};
\addlegendentry{Leja-Bagby set};

\addplot [color=blue_, opacity=0.8, mark size=1.5pt,only marks,mark=triangle,mark options={solid}]
  table[x = x, y = y,col sep=comma]{tables/car_pol_lb.txt};
\addlegendentry{Leja-Bagby poles (40)};

\addplot [color=green_,opacity=1, mark size=0.8pt,only marks,mark=*,mark options={solid}]
  table[x = x, y = y,col sep=comma]{tables/car_set_aaa_1.txt};
\addlegendentry{AAA set};

\addplot [color=green_, mark size=1.5pt,only marks,mark=o,mark options={solid}]
  table[x = x, y = y,col sep=comma]{tables/car_pol_aaa_1.txt};
\addlegendentry{AAA poles (11)};

\end{axis}
\end{tikzpicture}}
		 \caption{Leja--Bagby and AAA set and poles.}
		\end{subfigure}
		\hspace{-0.8cm}
		\begin{subfigure}{0.38\textwidth}
		 \centering
		 \setlength\figurewidth{0.65\textwidth}
		 {
%
%
\definecolor{green_}{RGB}{0,180,0}%
\definecolor{blue_}{RGB}{0,0,180}%

\begin{tikzpicture}
\begin{axis}[%
at={(0\figurewidth,0\figurewidth)},
width=\figurewidth,
height=0.5\figurewidth,
scale only axis,
axis lines = middle,
axis line style={-},
unbounded coords=jump,
xmin=-700,
xmax=0,
ymin=0,
ymax=5100,
domain=-200:300,
ytick={0,1000,2000,3000,4000,5000},
xtick={-700,-600,-200,-100},
restrict y to domain=0:5100,
xlabel={Re $\lambda $},
ylabel={Im $\lambda $},
label style={font=\footnotesize},
tick label style={font=\scriptsize},
y tick label style={right, xshift=0.25em},
yticklabel pos=right,
ylabel style={at={(ticklabel* cs:1)},anchor=south east},
xlabel style={at={(ticklabel* cs:-0.1)},anchor=south west},
]

\addplot [color=green_,opacity=0.9, mark size=1.5pt,only marks,mark=o,mark options={solid}]
  table[x = x, y = y,col sep=comma]{tables/car_pol_aaa_1.txt};

\addplot [color=green_,opacity=0.9, mark size=1.5pt,only marks,mark=o,mark options={solid}]
  table[x = x, y = y,col sep=comma]{tables/car_pol_aaa_1.txt};

\end{axis}

\begin{axis}[%
xshift=.5\figurewidth*6/2.5,
yshift=-.5\figurewidth*1.2/2.5,
width=.5\figurewidth,
height=.5\figurewidth,
axis y line*=left,
axis x line*=top,
scaled y ticks = false,
scaled x ticks = false,
xmin=35000,
xmax=36000,
ymax=-150000,
ymin=-170000,
restrict y to domain=-170000:-150000,
ytick={-160900},
xtick={35560},
xticklabels={3.6e4},
yticklabels={-1.6e5},
tick label style={font=\scriptsize},
legend style={at={(0.9,0.03)},anchor=south west,legend cell align=left,align=left,draw=white!15!black,font=\footnotesize,draw=none,fill opacity=1,text opacity=1}
]

\addplot [color=green_,opacity=0.9, mark size=1.5pt,only marks,mark=o,mark options={solid}]
  table[x = x, y = y,col sep=comma]{tables/car_pol_aaa_1.txt};

\end{axis}

\begin{axis}[%
xshift=-.5\figurewidth*0.5/2.5,
yshift=.5\figurewidth*3/2.5,
width=.5\figurewidth,
height=.5\figurewidth,
axis y line*=right,
axis x line*=bottom,
scaled y ticks = false,
scaled x ticks = false,
xmin=-29300,
xmax=-28300,
ymin=123000,
ymax=133000,
restrict y to domain=123000:133000,
ytick={128300},
xtick={-28860},
xticklabels={-2.9e4},
yticklabels={1.3e5},
tick label style={font=\scriptsize},
legend style={at={(0.9,0.03)},anchor=south west,legend cell align=left,align=left,draw=white!15!black,font=\footnotesize,draw=none,fill opacity=1,text opacity=1}
]

\addplot [color=green_,opacity=0.9, mark size=1.5pt,only marks,mark=o,mark options={solid}]
  table[x = x, y = y,col sep=comma]{tables/car_pol_aaa_1.txt};

\end{axis}

\end{tikzpicture}}

		 \vspace{1.5cm}

		 \caption{All of the AAA poles.}
		\end{subfigure}
		\hspace{1cm}

		\caption{Leja--Bagby and AAA sets for the car cavity problem, up to $514i$, and accompanying poles.}
\label{fig:car_values_2}
\end{figure}
\end{center}

\begin{figure}
\begin{center}
\centering
\setlength\figurewidth{0.4\textwidth}
%
%
\definecolor{green_}{RGB}{0,180,0}%
\definecolor{blue_}{RGB}{0,0,180}%
\begin{tikzpicture}
\begin{axis}[%
width=\figurewidth,
height=0.375\figurewidth,
at={(0\figurewidth,0\figurewidth)},
scale only axis,
unbounded coords=jump,
xmin=0,
xmax=300,
ymin=0,
ymax=10,
x=0.45,
xtick={50,100,150,200,250,300},
y=12,
axis x line*=bottom,
axis y line*=left,
xlabel={Re $\lambda $},
ylabel={Im $\lambda $},
label style={font=\footnotesize},
tick label style={font=\footnotesize} ,
]


\addplot [color=red, line width=0.5pt,mark size=2pt,only marks,mark=x]
  table[x = x_ritz, y = y_ritz,col sep=comma]{tables/car_conv.txt};

\node (r1) at (axis cs:98.7088329166687,1.1293005421985) {};
\node (r2) at (axis cs:177.786377894053,1.59357424285004) {};
\node (r3) at (axis cs:141.504725959656,1.55405284532905) {};
\node (r4) at (axis cs:130.189303441725,2.15956413192157) {};
\node (r5) at (axis cs:79.5615184237731,1.51534562200036) {};
\node (r6) at (axis cs:141.290058007675,4.28640030757782) {};

\end{axis}

\begin{axis}[%
x=3,
y=20,
xmin=0,
xmax=40,
xshift=6cm,
width=8cm,
axis y line*=left,
axis x line*=bottom,
ymin=0.4, 
ymax=6.5,
yticklabels = {,,},
xtick={25,30,35,40},
minor xtick={26,27,28,29,31,32,33,34,36,37,38,39},
minor grid style={line width=.1pt, draw=gray!10},
major grid style={line width=.6pt,draw=gray!20},
xmajorgrids=true,
xminorgrids=true,
xlabel={\# iterations $>10^{-12}$},
legend style={at={(0.9,0.03)},anchor=south west,legend cell align=left,align=left,draw=white!15!black,font=\footnotesize,draw=none,fill opacity=1,text opacity=1}
]

\addplot[xbar,color=green_,fill=green_,bar width=3] table[y = ii, x = k_lb,col sep=comma]{tables/car_conv.txt};
\addlegendentry{Leja-Bagby};
\addplot[xbar,color=blue_,fill=blue_,bar width=3] table[y = ii2, x = k_aaa,col sep=comma]{tables/car_conv.txt};
\addlegendentry{AAA};


\node (v1) at (axis cs:0,1) {};
\node (v2) at (axis cs:0,2) {};
\node (v3) at (axis cs:0,3) {};
\node (v4) at (axis cs:0,4) {};
\node (v5) at (axis cs:0,5) {};
\node (v6) at (axis cs:0,6) {};

\end{axis}

\draw[color=red,opacity=0.2, line width=0.8pt]  (r1.center) -- (v1.center);
\draw[color=red,opacity=0.2, line width=0.8pt]  (r2.center) -- (v2.center);
\draw[color=red,opacity=0.2, line width=0.8pt]  (r3.center) -- (v3.center);
\draw[color=red,opacity=0.2, line width=0.8pt]  (r4.center) -- (v4.center);
\draw[color=red,opacity=0.2, line width=0.8pt]  (r5.center) -- (v5.center);
\draw[color=red,opacity=0.2, line width=0.8pt]  (r6.center) -- (v6.center);

\end{tikzpicture}%
\end{center}
\caption{
	Ritz values and number of iterations to reach tolerance $10^{-12}$ for the car cavity problem.
}
\label{fig:car_error}
\end{figure}
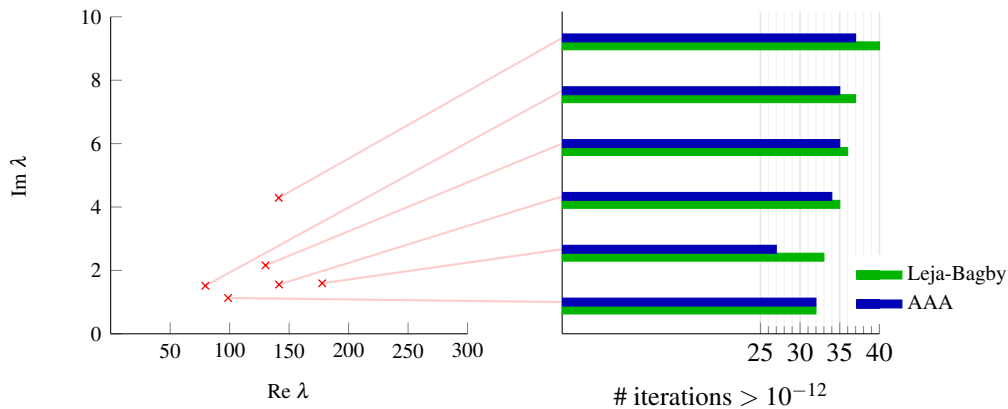

\section{Conclusions}\label{sec:conlusions}

We have proposed a method for solving the nonlinear eigenvalue problem by first approximating the associated nonlinear matrix valued function using the AAA algorithm.
  This approximation is embedded in a state space representation, which leads to a strong linearization that can be solved using the Compact Rational Krylov method.
  We presented two versions: one that approximates each function separately and then the set-valued version that approximates all functions together.
  The latter version is very competitive with NLEIGS in terms of degree of the rational approximation, and in all our tests,
  AAA requires less poles than an approximation using potential theory, even for a problem with eighty different functions.
  The main advantage of the method is the fully automatic procedure in the determination of the rational approximations.
  Although we did not try solving a problem described by an extremely large number of nonlinear functions, i.e., of the order of the size of the matrix, we expect that the construction of the AAA approximation may
  become prohibitive, when the number of nonlinear functions is extremely high, which is a disadvantage that NLEIGS does not share.

\section*{Acknowledgment}
The authors thank Dr.~Axel van de Walle for providing us with the car cavity model that he generated for us using a model from a Virtual Lab tutorial (formerly LMS International, currently Siemens Industry Software).

\bibliographystyle{IMANUM-BIB}
\bibliography{stringlong,library}

\appendix

\section*{Appendix A.\ Set-valued AAA implementation}\label{app:setAAA}

\lstset{language=Matlab,%
	breaklines=true,%
	morekeywords={matlab2tikz},
	identifierstyle=\color{black},%
	showstringspaces=false,
	basicstyle=\footnotesize,
}
\lstinputlisting{aaa2_output.m}

\section*{Appendix B.\ Proofs of Theorems \ref{thm:UL_Z} and \ref{thm:evals_LR_low-rank}}\label{app:proof_linearization}

The following simple lemma turns out be useful. 
\begin{lemma}\label{lem:schur_R}
Let $G_{22} \in \C^{m \times m}$ and $X_1 \in \C^{n \times n}$ be invertible matrices that satisfy 
\begin{equation}\label{eq:G_X_Y}
 \begin{bmatrix} G_{11} & G_{12} \\ G_{21} & G_{22} \end{bmatrix} \begin{bmatrix} X_1 \\ X_2  \end{bmatrix} = \begin{bmatrix} Y_1 \\ O \end{bmatrix},
\end{equation}
then the following block-UL decomposition holds:
\begin{equation}\label{eq:G_UL}
 \begin{bmatrix} G_{11} & G_{12} \\ G_{21} & G_{22} \end{bmatrix} = 
  \begin{bmatrix} I_n & G_{12} G_{22}^{-1} \\ O & I_{m} \end{bmatrix}
  \begin{bmatrix} Y_1 X_1^{-1} & O \\ G_{21} & G_{22} \end{bmatrix}.
\end{equation}
\end{lemma}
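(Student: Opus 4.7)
The plan is to prove the lemma via a direct Schur-complement computation, using the hypothesis in~\eqref{eq:G_X_Y} to identify the $(1,1)$ block of the resulting block-lower-triangular factor with $Y_1 X_1^{-1}$.

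First, I would unpack~\eqref{eq:G_X_Y} into its two block rows. The second row reads $G_{21} X_1 + G_{22} X_2 = O$, and since $G_{22}$ is invertible this yields $X_2 = -G_{22}^{-1} G_{21} X_1$. Substituting into the first block row $G_{11} X_1 + G_{12} X_2 = Y_1$ gives
\[
  (G_{11} - G_{12} G_{22}^{-1} G_{21})\, X_1 = Y_1,
\]
and since $X_1$ is also invertible we obtain the key identity
\[
  G_{11} - G_{12} G_{22}^{-1} G_{21} = Y_1 X_1^{-1}.
\]

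Second, I would verify the factorization~\eqref{eq:G_UL} by simply multiplying out the right-hand side:
\[
  \begin{bmatrix} I_n & G_{12} G_{22}^{-1} \\ O & I_{m} \end{bmatrix}
  \begin{bmatrix} Y_1 X_1^{-1} & O \\ G_{21} & G_{22} \end{bmatrix}
  =
  \begin{bmatrix} Y_1 X_1^{-1} + G_{12} G_{22}^{-1} G_{21} & G_{12} \\ G_{21} & G_{22} \end{bmatrix}.
\]
Using the identity derived in the previous step, the $(1,1)$ block equals $G_{11}$, and the other three blocks already match by inspection. This completes the proof.

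There is no real obstacle here; the result is essentially the standard block-UL (Schur complement) factorization, where the novelty is only that the usual Schur complement $G_{11} - G_{12} G_{22}^{-1} G_{21}$ is replaced by the more convenient expression $Y_1 X_1^{-1}$ recovered from the hypothesis. The only small subtlety is to make sure that $X_1$ being invertible is indeed used (to isolate $Y_1 X_1^{-1}$), and that $G_{22}$ being invertible is used both to solve for $X_2$ and to form the UL factors.
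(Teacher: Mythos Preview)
Your proof is correct and follows essentially the same approach as the paper: both identify the Schur complement $G_{11} - G_{12} G_{22}^{-1} G_{21}$ with $Y_1 X_1^{-1}$ by solving the second block row of~\eqref{eq:G_X_Y} for $X_2$ and substituting into the first, after which the block-UL factorization is immediate.
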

\begin{proof}
After block elimination of the square matrix on the left-hand size of~\eqref{eq:G_UL}, we only need to show that its Schur complement $S = G_{11} - G_{12} G_{22}^{-1}  G_{21}$ equals $Y_1 X_1^{-1}$. But this follows directly from the identities $G_{11} = (Y_1 - G_{12} X_2)X_1^{-1}$ and $X_2 = - G_{22}^{-1}G_{21} X_1$ that are implied by~\eqref{eq:G_X_Y}. 
\end{proof}

\begin{proof}[Proof of Theorem~\ref{thm:UL_Z}]
	Identity~\eqref{eq:det_L_R} follows from direct manipulation together with $\det (A \otimes I_n) = (\det(A))^n$. To show the block-UL decomposition, recall from~\eqref{eq:trimmed_LR} and~\eqref{eq:trimmed_LR_permuted} that
\[
 (\mathcal{\widetilde L}_R(\lambda) \mathcal{P}) \, (\mathcal{P}^T\bsym{\Psi}(\lambda)) = \begin{bmatrix} R(\lambda) \\ \hline \bsym{O} \end{bmatrix} \text{ with }
 \mathcal{\widetilde L}_R(\lambda) \mathcal{P} = \left[\begin{array}{c|cc}
 \bsym A_0 - \lambda \bsym B_0 & \bsym A_1 - \lambda \bsym B_1 & \bsym C - \lambda \bsym D \\\hline
 \bsym M_0 - \lambda \bsym N_0 & \bsym M_1 - \lambda \bsym N_1 &  \bsym{O}  \\
 \bsym Z^*_0 &  \bsym Z^*_1 & \bsym E - \lambda \bsym F
 \end{array}\right].
\]
The vertical and horizontal lines indicate compatible block partitioning. The corresponding partitioning for $\mathcal{P}^T\bsym{\Psi}(\lambda)$ satisfies
\[
 \mathcal{P}^T \bsym{\Psi}(\lambda) =  \begin{bmatrix} p_1 I_n \\ \hline p_2 I_n \\ \vdots \\ p_k I_n \\ (R_1(\lambda) \otimes I_{k_1}) \widetilde Z_1^* \\ \vdots 
 \end{bmatrix} \text{ with } p = \begin{bmatrix}
 p_1 \\ \vdots \\ p_k 
\end{bmatrix} := \Pi^T f(\lambda) \in \C^k.
\]
The required block-UL decomposition now follows from a direct calculation if we can apply Lemma~\ref{lem:schur_R} to the partitioned matrix $\mathcal{\widetilde L}_R(\lambda) \mathcal{P}$. In order to be able to do this, we only have to establish that $p_1 = e_1^T \Pi^Tf(\lambda) \neq 0$ since then $X_1 = p_1 I_n$ is invertible (and we already have $Y_1 = R(\lambda)$). To this end, we use the definition of $\Pi$ to obtain
\[
(M-\lambda N) f(\lambda) = 0 \ \iff \ \begin{bmatrix} m_0 - \lambda n_0 & M_1 - \lambda N_1 \end{bmatrix} p = 0.
\]
Since $M_1 - \lambda N_1$ is invertible and $\dim \Ker (M-\lambda N) = 1$, the null-vector $p$ has to be of the form
\[
	p = \alpha \, \begin{bmatrix} 1 \\ -(m_0 - \lambda n_0) (M_1 - \lambda N_1)^{-1} \end{bmatrix}, \qquad \alpha \in \C.
\]
Since $f(\lambda)$, and thus $p$, cannot never be identically zero, we get as requested that $\alpha = p_1 \neq 0$. 
\end{proof}    

\begin{proof}[Proof of Theorem~\ref{thm:evals_LR_low-rank}]
\eqref{item_a_thm:evals_LR_low-rank} Let $(\lambda_0,x)$ be an eigenpair of $R(\lambda)$, that is, $R(\lambda_0)x=0$.  Recalling that $f(\lambda)^T = \begin{bmatrix} f_0(\lambda) & \cdots & f_{k-1}(\lambda) \end{bmatrix}$ with $f_0(\lambda) \equiv 1$, we obtain
\[
 y = \bsym{\Psi}(\lambda_0)x = 
 \begin{bmatrix} x \\ f_1(\lambda_0) x \\ \vdots \\ f_{k-1}(\lambda_0) x \\ (R_1(\lambda_0) \otimes I_{k_1}) \widetilde Z_1^*x \\ \vdots \\ (R_s(\lambda_0) \otimes I_{k_s}) \widetilde Z_s^*x \end{bmatrix}
\]
and thus also $y \neq 0$  due to $x \neq 0$. Using~\eqref{eq:trimmed_LR}, we see that $(\lambda_0,y)$ verifies the eigenpair equation 
\[
\mathcal{\widetilde L}(\lambda_0) y  =  \begin{bmatrix} R(\lambda_0) \\ \bsym{O} \end{bmatrix} x = \begin{bmatrix} 0 \\ 0 \end{bmatrix}.
\]

\eqref{item_b_thm:evals_LR_low-rank} Let $(\lambda_0,y)$ be an eigenpair of $\mathcal{\widetilde L}(\lambda)$. Thanks to~\eqref{eq:trimmed_LR}, it suffices to show that $y = \bsym{\Psi}(\lambda_0)x$ for some nonzero $x \in \C^n$ since that implies $R(\lambda_0) x = 0$. To this end, consider the eigenpair equation $\mathcal{\widetilde L}(\lambda_0) y = 0$ in partitioned form:
\[
\left[\begin{array}{c|c}
 \bsym A - \lambda_0 \bsym B & \bsym C - \lambda_0 \bsym D \\
 \bsym M - \lambda_0 \bsym N & \bsym{O}  \\
  \bsym Z^*  & \bsym E - \lambda_0 \bsym F
 \end{array}\right] \begin{bmatrix} y_{AB} \\ \hline y_{CD} \end{bmatrix} = \begin{bmatrix} 0 \\ 0 \\ 0 \end{bmatrix}.
\]
The second block-row expresses that $y_{AB}$ is a null vector of $\bsym M - \lambda_0 \bsym N = (M-\lambda_0 N) \otimes I_n$. By definition of $f(\lambda)$, we have $(M-\lambda N)f(\lambda) = 0$ from which $(\bsym M - \lambda \bsym N) (f(\lambda) \otimes I_n) = 0$. Since $f_0(\lambda) \equiv 1$, the $kn \times n$ matrix $F(\lambda) := f(\lambda) \otimes I_n$ has rank $n$. In addition, by rank nullity, $\dim \Ker (\bsym M - \lambda \bsym N) = n$ and so $F(\lambda)$ is a basis for $\Ker (\bsym M - \lambda \bsym N)$. Hence, there exists $x \in \C^n$ such that (recall $f_0(\lambda) \equiv 1$)
\begin{equation}\label{eq:y_AB}
y_{AB} = F(\lambda_0) x = \begin{bmatrix}  x \\ f_1(\lambda_0) x \\ \vdots \\ f_{k-1}(\lambda_0)  x\end{bmatrix}.
\end{equation}
The third block-row reads
\[
 (\bsym{E}-\lambda_0 \bsym{F}) y_{CD} = -\bsym{Z}^* y_{AB}.
\]
Together with the definitions of $\bsym{Z},\bsym{E},\bsym{F}$ and~\eqref{eq:y_AB}, we also obtain
\[
 \begin{bmatrix} (E_1 - \lambda_0 F_1) \otimes I_{k_1} \\ & \ddots \\  & & (E_s - \lambda_0 F_s) \otimes I_{k_s} \end{bmatrix} y_{CD} =
	 \begin{bmatrix} (b_1 \otimes I_{k_1})\widetilde{Z}_1^* \\ \vdots \\ (b_s \otimes I_{k_s})  \widetilde{Z}_s^*\end{bmatrix} (e_1^Tf(\lambda_0) \otimes I_n) x.
\]
Hence, isolating for $y_{CD}$ and using $R_i(\lambda) = (E_i-\lambda F_i)^{-1}b_i$, we get
\begin{equation}\label{eq:y_CD}
y_{CD} =  \begin{bmatrix} ((E_1 - \lambda_0 F_1)^{-1} \otimes I_{k_1}) (b_1 \otimes I_{k_1})\widetilde{Z}_1^* \\ \vdots \\ ((E_s - \lambda_0 F_s)^{-1} \otimes I_{k_s})  (b_s \otimes I_{k_s})  \widetilde{Z}_s^*\end{bmatrix}  x = \begin{bmatrix} (R_1(\lambda_0) \otimes I_{k_1}) \widetilde{Z}_1^* \\ \vdots \\ (R_s(\lambda_0) \otimes I_{k_s})  \widetilde{Z}_s^*\end{bmatrix}  x.
\end{equation}
Now combining~\eqref{eq:y_AB} and~\eqref{eq:y_CD}, we have indeed shown that there exists $x \in \C^n$ such that
\[
 y = \begin{bmatrix} y_{AB} \\ \hline y_{CD} \end{bmatrix} = \bsym{\Psi}(\lambda_0) x.
\]
Observe that $\bsym{\Psi}(\lambda)$ has full column rank thanks to $F(\lambda)$ being its upper block. Hence, from $y \neq 0$ it follows that $x\neq 0$ and we haven proven \eqref{item_b_thm:evals_LR_low-rank}.

\eqref{item_c_thm:evals_LR_low-rank} The statement about the algebraic multiplicity of $\lambda_0$ follows directly from~\eqref{eq:det_L_R} since the matrices $M_1 - \lambda_0 N_1$ and $E_i - \lambda_0 F_i$ are invertible by construction of $\Pi$ and by assumption, respectively, and $\alpha \neq 0$. Next, we show the equality of the geometric multiplicity of $\lambda_0$, that is, $\dim \ker R(\lambda_0)= \dim \ker  \mathcal{\widetilde L}_R(\lambda_0)$. Let $\{x_1,\hdots,x_t\}$ be a basis for $\ker R(\lambda_0)$. Then by~\eqref{item_a_thm:evals_LR_low-rank}, $\bsym{\Psi}(\lambda_0)x_i \in \ker  \mathcal{\widetilde L}_R(\lambda_0)$ for $i=1,\ldots, t$. As argued above $\bsym{\Psi}(\lambda_0)$ has full column rank, hence the vectors $\bsym{\Psi}(\lambda_0)x_1,\ldots,\bsym{\Psi}(\lambda_0)x_t$ are linearly independent and so $\dim \ker R(\lambda_0) \leq \dim \ker  \mathcal{\widetilde L}_R(\lambda_0)$. Similarly, if $\{y_1,\hdots,y_t\}$ is a basis for $\ker  \mathcal{\widetilde L}_R(\lambda_0)$, then by~\eqref{item_b_thm:evals_LR_low-rank}, $y_i = \bsym{\Psi}(\lambda_0) x_i$ for some $x_i \in \C^n$. Again due to $\bsym{\Psi}(\lambda_0)$ having full column rank, the $x_1, \ldots, x_t$ are linearly independent. Hence, $\dim \ker R(\lambda_0) \geq \dim \ker  \mathcal{\widetilde L}_R(\lambda_0)$, as we wanted to show.
\end{proof}    

\end{document}